 \newtheorem{remark}{Remark}
\newtheorem{theorem}{Theorem} \newtheorem{lemma}{Lemma}
\newtheorem{proposition}{Proposition}
\def \R{{\mathbb{R}}}
\def \C{{\mathbb{C}}}
\def \Z{{\mathbb{Z}}}
\def \vae {{\varepsilon }}
\def \fe {{f^\varepsilon}}
\def \torus  {{\mathbb{T}}^d }
\def \del {\partial} 
\title{About the derivation of the quasilinear approximation in plasma
  physics.}  \author{Claude Bardos  \footnotemark[1] \and Nicolas
  Besse \footnotemark[2]} 
\begin{document}

\maketitle \tableofcontents

{\bf Keywords:} Vlasov equation,   quasilinear theory,   Landau
damping, plasma physics.

\renewcommand{\thefootnote}{\fnsymbol{footnote}}

\footnotetext[1]{ Laboratoire J.-L. Lions, Universit\'e Pierre et
  Marie Curie - Paris 6   BP 187, 4 place Jussieu, 75252 Paris, Cedex
  5, France ({\tt claude.bardos@gmail.com})}

\footnotetext[2]{ Laboratoire J.-L. Lagrange,  UMR CNRS/OCA/UCA 7293,
  Universit{\'e} C\^ote d'Azur, Observatoire de la C\^ote d'Azur, Bd
  de l'observatoire CS 34229, 06300 Nice, Cedex 4, France.  ({\tt
    Nicolas.Besse@oca.eu})}

\maketitle
\begin{abstract}
 This contribution, built on the companion paper \cite{BB}, is
  focused on the different mathematical approaches available for the
  analysis of the quasilinear approximation in plasma physics.
\end{abstract}

\section{Introduction and notation}
The origin of this contribution is the issue of the approximation of
solutions of the Vlasov equation 
\begin{equation*}
\del_t f +v\cdot \nabla_x f + E\cdot \nabla_v \cdot f =0
\,,\label{Liouville00}
\end{equation*}
where $f(t,x,v)$ is a probability density driven by a selfconsistent
potential, given in terms of this density by the Poisson equation
\begin{equation*}
-\Delta \Phi(t,x)  = \rho(t,x) =\int_{\R^d_v} f(t,x,v)dv -1 \,, \quad
E(t,x) = -\nabla \Phi(t,x)\,,
\end{equation*}
in the  domain $\torus \times \R^d_v$,   with $\torus=({\R_x}/{2\pi\mathbb Z})^d$,
by a parabolic
(linear or nonlinear) diffusion equation for the space averaged
density of particles, namely
\begin{equation}
\del_t\overline{f}(t,v)-\nabla_v\cdot \big(\mathbb D(t,v) \nabla_v
\overline{f}(t,v)\big) =0 \,.\label{diffusion0}
\end{equation}
Such equation carries the name of quasilinear approximation  and is a
very active subject of plasma physics. Here,
relying on a compagnon paper \cite{BB} (devoted to a more detailed
physical analysis and more focused on the interpretation of
turbulence), we focus on the different mathematical approaches
motivated by all the recent progress (for instance around the question
of Landau damping) on the analysis of the Vlasov equation.

Starting from the natural scaling derived for instance in \cite{BB},
we propose a rescaled version of the Vlasov equation and first show
obstructions to the convergence to  an equation of the type
(\ref{diffusion0}) with a non zero diffusion. This leads to the
introduction of a stochastic vector field, hence to a non
selfconsistent Liouville equation. There, a direct approach (as in the
contributions of A. Vasseur and coworkers \cite{LV04, PV03})  produces
a complete positive answer.

A more classical analysis leads to the comparison with the present
results on Landau damping and to an alternate approach based on the
spectral theory and at variance with the rescaled equation valid only
for short time. No complete proof is given but a natural road for
convergence based on some plasma physics computations is proposed.

As a short time correction, it may play in the subject  the same role
as was done by the introduction of the diffusion in the macroscopic
limit of the Boltzmann equation by Ellis and Pinsky \cite{EP75}.

As a mathematical contribution to physics (however modest it may be)
we dedicate this paper to the memory of Alex Grossman. Besides being
recognized for super scientific achievement in particular with the
introduction of wavelets, he will be remembered over many years
with his generous and charismatic influence on our community.

\subsection{Notation and some hypotheses}
The flow $S(t): f\mapsto  f(t,x-vt,v)\,$,  with $s\mapsto x-vs$
denoting the free advection flow modulo $(2\pi)^d$ on $\torus$, is the
advection flow generated by the operator $-v\cdot \nabla_x\, $.  In
the same way, we also introduce the flow $S^\vae$ generated by the
operator $-\vae^{-2}v\cdot \nabla_x\, $ and defined by
\begin{equation*}
S^\vae_t f=S\left(\frac{t}{\vae^2}\right) f
=f\Big(t,x-v\frac{t}{\vae^2},v\Big)\,. \label{advection0}
\end{equation*}
These are unitary groups in any $L^p(\torus\times \R^+_v)$, with $1\le
p\le\infty$, which preserve the positivity and the total mass.
Since this is not the relevant issue for our discussion, in presence
of an $\vae>0\,, $  the initial data $f^\vae(0,x,v)$ is assumed to be
independent of $\vae$ and as smooth as required (hence taking in
account existing regularity results \cite{GLA96} for the Vlasov
equation) to have global in time solutions which will satisfy the
relevant computations. On the other hand, emphasis has to be put on
the regularity estimates which are independent of $\vae\,.$

As observed in many previous publications starting from Landford
\cite{LAND75}, limit can be obtained not at the level of the equation
but at the level of the solution itself. As a consequence, we will use
the first order Duhamel formula
\begin{equation*}
 f(t,x,v) =S_tf(0) -\int_0^t d\sigma_1\, S_{t-\sigma_1}E(\sigma_1)\cdot\nabla_v
 f(\sigma_1)\,,
\end{equation*}
and in Section~\ref{Sec:NSCSA} an avatar of the second order Duhamel,
to connect the value of $f(\sigma_1)$ with the value of $f(\sigma_2)$
according to the Duhamel formula
\begin{equation*}
 f(\sigma_1)=S_{\sigma_1-\sigma_2}f(\sigma_2)
 +\int_{\sigma_2}^{\sigma_1} d\sigma\, S_{\sigma_1-\sigma} E(\sigma)
 \cdot\nabla_v f(\sigma)\,,
\end{equation*}
which finally gives
\begin{equation*}
\begin{aligned}
&f(t) =S_tf(0) +\int_0^t d\sigma_1\, S_{t-\sigma_1} E(\sigma_1)\cdot
  \nabla_v S_{\sigma_1-\sigma_2} f(\sigma_2)\\ &+ \int_0^t
  d\sigma_1\int_{\sigma_2}^{\sigma_1} d\sigma\, S_{t-\sigma_1}
  E(\sigma_1)\cdot \nabla_v(S_{\sigma_1-\sigma} E(\sigma)\cdot\nabla_v
  f(\sigma))\,.
\end{aligned}
\end{equation*} 
Denoting by $\fint   dx$ the $x$-average on $\torus$,  one
obtains 
\begin{equation}
\del_t \fint f(t,x,v)dx + \nabla_v \cdot \fint E(t,x)
f(t,x,v) dx =0 \label{Fick0}\,.
\end{equation}
The second term of  (\ref{Fick0}) is the divergence of the averaged
flux 
\begin{equation*}
J = \fint E(t,x) f(t,x,v) dx \,.
\end{equation*}
Therefore almost all the rest of this contribution is devoted to the
determination of such flux (sometimes called a ``Fick law").  Since
weak convergence involves distributions  and test functions, such
duality is denoted by the following bracket notation, $\langle
.,.\rangle$. Using the fact that $\mathcal D (\R_t\times
\torus\times\R^d_v  )= \overline{ \mathcal D ( \R_t ) \otimes \mathcal
  D (\torus )\otimes \mathcal D(\R^d_v)} $, test functions depending
on  one or an other of these spaces will be used according to
convenience. Eventually the symbol $\overline{T^\vae}$ will be used to
denote any cluster point  (in the sense of distributions or under some
other stronger topology) of a family $\{T^\vae\}$ of bounded
distributions.

\subsection{The rescaled Liouville equation}

Both from plasma physics considerations (see \cite{BB}) and also
because it is compatible with the scaling invariance of the diffusion
in the velocity variable (see equation (\ref{diffusion0}), densities $f^\vae(t,x,v)$,
solutions of the following rescaled (with $\vae>0$) Liouville
equation, 
 \begin{equation} \label{eqn:L1}
 \partial_tf^\varepsilon +
 \frac{v}{\varepsilon^2}\cdot\nabla_xf^\varepsilon +
 \frac{E^\varepsilon}{\varepsilon}\cdot \nabla_{v}f^\epsilon =0\,,
 \quad \hbox{with}\quad  E^\vae =-\nabla \Phi^\vae\,,
\end{equation}
are considered.

As it will soon appear below, the specific behavior of the solutions
as $\vae \rightarrow 0$ (and/or as $t\rightarrow \infty$) depends of
the time singularities of the potential rather than the space
regularity and properties of the initial data.  Then, unless
otherwise specified, it is assumed below that the initial data
$f_0\in\mathcal S(\torus\times\R^d_v)$  is an $\vae$-independent
smooth function  and that   $E^\vae=-\nabla \Phi^\vae\,$ is, locally
in time, uniformly Lipschitz with respect to the variable $x$. 
Under such hypotheses, solutions of  equation (\ref{eqn:L1}) are well
and uniquely defined.  On the other hand no assumption is made on the
uniform regularity of the solution $\fe(t,x,v)$ either as
$t\rightarrow \infty$ or as $\vae\rightarrow 0$. 

Hence, only $\vae -$independent estimates, which are in agreement with
the classical results including in particular those concerning the
solution of the Vlasov equation, 
are based on the fact that the Liouville equation preserves positivity
and Lebesgue measure.
\begin{equation*}
\begin{aligned}
& \forall \, t\in\R^+, \quad 0\le \fe(t,x,v)\le \sup_{(x,v)\in
    \torus\times\R^d_v} f_0(x,v), \quad \int_{\torus\times\R^d_v}
  \fe(t,x,v) dxdv =1\,,\\ &\forall 1 \le p\le \infty,\quad \forall \,
  t\in\R^+, \quad \| f^\vae(t)\|_{L^p(\torus\times \R^d_v)}= \|f(0)
  \|_{L^p(\torus \times \R^d_v)} \label{apriori0}\,.
\end{aligned}
\end{equation*}
To use the scaling and the ergodicity of the $d-$dimensional torus the
following proposition is recalled.
\begin{proposition}{(Ergodicity)}
  \label{lem:EFF}
 \begin{enumerate}
\item  Any  $g\in L^p(\torus\times \R^d_v)$, with $1\leq p \leq
  \infty$, which satisfies the relation 
  \begin{equation}
v\cdot \nabla g =0, \quad in \quad  \mathcal{D}'({{\mathbb{T}}^d
}\times \R^d_v)\,, \label{erg1}
\end{equation}
is an $x$-independent function.
\item The solutions $\fe$ of the equation
\begin{equation}
\vae^2 \del_t \fe + v\cdot\nabla_x \fe =0\,, \quad
\fe(0,x,v)=f_0(x,v)\in L^p(\torus\times\R^d_v)\,, \label{CM2}
\end{equation}
converge in   $L^\infty(\R^+_t ; \mathcal D'(\R^d_v))$ to $\fint
f_0(x,v)dx\,.$
\end{enumerate}
\end{proposition}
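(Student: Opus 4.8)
The plan is to reduce both statements to a Fourier analysis in the space variable $x$, exploiting that the only information surviving the fast transport is carried by the resonant (zero) mode, while every nonzero mode is either killed by a measure-zero resonance condition or averaged away by oscillation.

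For the first statement I would expand $g$ in a Fourier series on the torus, $g(x,v)=\sum_{k\in\mathbb{Z}^d}\hat g_k(v)e^{ik\cdot x}$ with $\hat g_k(v)=(2\pi)^{-d}\int_{\mathbb{T}^d} g(x,v)e^{-ik\cdot x}dx$, which is legitimate since $L^p(\mathbb{T}^d)\hookrightarrow L^1(\mathbb{T}^d)$ for $p\ge 1$. Testing the distributional identity $v\cdot\nabla_x g=0$ against $e^{-ik\cdot x}\psi(v)$ with $\psi\in\mathcal D(\mathbb{R}^d_v)$ turns it, after an integration by parts in $x$ (recall $v$ is $x$-independent), into $\int_{\mathbb{R}^d_v}(v\cdot k)\hat g_k(v)\psi(v)\,dv=0$, hence $(v\cdot k)\hat g_k(v)=0$ for a.e.\ $v$ and every $k$. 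For $k\neq 0$ the resonant set $\{v:\,v\cdot k=0\}$ is a hyperplane of Lebesgue measure zero, so $\hat g_k\equiv 0$; only $\hat g_0(v)=\fint g\,dx$ survives and $g$ is $x$-independent. The one point to watch is precisely that the flow $x\mapsto x+tv$ is ergodic only in the irrational directions, but those exceptional directions form a null set in $v$, which is why the a.e.-in-$v$ formulation yields the clean conclusion that pointwise-in-$v$ ergodicity could not.

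For the second statement I would start from the explicit representation $f^\varepsilon(t,x,v)=S^\varepsilon_t f_0=f_0(x-vt/\varepsilon^2,v)$. Note first that the $x$-average is \emph{exactly} conserved, $\fint f^\varepsilon(t,\cdot,v)\,dx=\fint f_0(\cdot,v)\,dx$, by translation invariance on the torus, so the candidate limit is forced; the genuine content is therefore the weak convergence against full test functions $\varphi(x,v)$. Pairing $f^\varepsilon(t)$ with $\varphi$, changing variable $x\mapsto x+vt/\varepsilon^2$, and expanding both $f_0$ and $\varphi$ in Fourier series in $x$, the bilinear pairing decouples by modes into a sum over $k$ of terms of the type $e^{i(k\cdot v)t/\varepsilon^2}\int_{\mathbb{R}^d_v}\hat f_{0,-k}(v)\hat\varphi_k(v)\,dv$. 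The $k=0$ term reproduces exactly $\int_{\mathbb{R}^d_v}(\fint f_0\,dx)(\fint\varphi\,dx)\,dv$, while each $k\neq 0$ term is an oscillatory integral in $v$ with frequency $(t/\varepsilon^2)k\to\infty$ that tends to $0$ by the Riemann--Lebesgue lemma; rapid decay of $\hat\varphi_k$ in $k$ together with $f_0\in L^1_{loc}$ lets me sum these estimates over $k$ by dominated convergence.

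The delicate part, which I expect to be the main obstacle, is the $L^\infty$-in-time statement: for $t$ near $0$ the oscillation frequency $t/\varepsilon^2$ need not be large, so there is no decay uniform down to $t=0$. I would therefore read the convergence in the weak-$\ast$ sense of $L^\infty(\mathbb{R}^+_t;\mathcal D'_v)$, pairing additionally with $\theta(t)\in\mathcal D(\mathbb{R}^+)$ and integrating in $t$; this supplies a second oscillatory average, and Riemann--Lebesgue applied to $\int\theta(t)e^{i(k\cdot v)t/\varepsilon^2}\,dt$ then annihilates the $k\neq 0$ modes with no lower cutoff in $t$ needed. Controlling this transition region $t=O(\varepsilon^2)$ is exactly what the time-averaged, weak-$\ast$ formulation is designed to absorb; everything else is the routine bookkeeping of Fourier coefficients and a single application of Riemann--Lebesgue.
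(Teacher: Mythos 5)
Your proof is correct and follows essentially the same route as the paper: Fourier expansion in $x$, the observation that for $k\neq 0$ the resonance condition confines $\hat g_k$ to a null set of $v$ so that $\hat g_k\equiv 0$ in $L^p$, and for point 2 the explicit free-transport solution combined with the Riemann--Lebesgue lemma applied to the oscillatory factor $e^{ik\cdot v\,t/\vae^2}$. The only (cosmetic) differences are that the paper derives $(1-e^{ik\cdot v t})\hat g(k,v)=0$ from invariance under the flow rather than $(k\cdot v)\hat g_k=0$ directly, and applies Riemann--Lebesgue in the $v$ variable at fixed $t>0$, whereas you additionally average in $t$ to handle the region $t=O(\vae^2)$.
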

This proposition and its use in Landau damping are classical (see
\cite{CM98, HV09}) and, for sake of completeness,  its proof is
shortly recalled below.
\begin{proof}
From  (\ref{erg1})  one deduces the relation,
$$ \frac d{dt} g(x-vt,v)=0\,,
$$ which by Fourier transform gives 
  \begin{equation*}
    \label{eqn:lemEFF:1}
    \big(1-\exp({\rm i}   k\cdot v t   )\big) \hat{g}(k,v)=0, \quad
    \forall k\in \Z^d, \ v\in \R^d, \ t \in \R\,.
  \end{equation*}
  This relation implies that the support of $\hat g$ is contained in
  the set
  \[
    {\rm supp}(\hat g):=\{(k,v)\in\Z^d \times \R^d \ |\ k\cdot v t
    \in 2\pi\Z, \ \forall t\in\R\}\,.
  \]
  For any $\delta, \, T, \, r, \, R > 0$, such that $ \delta <T$,
  and $r<R$, the Lebesgue measure of the set ${\rm supp}(\hat g)$ for $\delta < t < T$
  and $ r <|v| <R$ is zero. Therefore, Since $ g$ belongs to
  $L^p(\torus\times\R^d_v)$, this forces $\hat g$ to be equal to zero
  for all $k\neq 0$  and finally one obtains
  $$g=\hat g(0,v) =\fint  g(x,v) dx \,.$$
 In the same way for  the point 2, use the fact that the solution of
(\ref{CM2}) is given by $\fe(t,x,v) =f_0(x-v t/{\vae^2} ,v)$ to
 write, for all $ k\in\mathbb Z^d $ and for any $ \phi \in \mathcal
 D(\R^d_v)\,,$
  \begin{equation} 
 \int_{\R^d_v}\hat{\fe} (t,k,v)   \phi(v)dv=\int_{\R^d_v}dv\fint
 dx \fe(t,x,v)e^{-ik\cdot x } \phi(v)dv=\int_{\R^d_v} \hat {f_0}(k,v)
 \phi(v) e^{ik\cdot v \frac{t}{\vae^2}}dv\,. \label{CM1}
\end{equation}
  Since $f_0(x,v)\in L^p(\torus \times \R^d_v)\,,$ by the Riemann
  Lebesgue theorem the right-hand side of (\ref{CM1}) goes to $0$ for
  $k\not=0$ as  $\vae\rightarrow 0\,.$ Hence completing the proof of
  the point 2.
\end{proof}
  
As a consequence, writing the rescaled Liouville equation in the
following form, 
 \begin{equation} \label{eqn:RL1}
v \cdot\nabla_xf^\varepsilon= -\vae^2 \partial_tf^\varepsilon -\vae
E^\varepsilon \cdot \nabla_{v}\fe,  \end{equation} one deduces from
 the uniform estimates  that any cluster point $\overline
 {\fe}=\overline{\fe}(t,v)$ of the family $\{\fe\}\,,$ in the $
 L^\infty(\R^+_t\times\torus\times\R^d_v)$ weak$-\star$ topology, is
 independent of $x$ and is a solution of the equation 
\begin{equation}\label{Fick 0}
\del_t \overline{\fe} + \nabla_v\cdot  \left( \overline{ \fint {
    \frac{E^\vae \fe} {\vae}}dx}\right)=0\,.
\end{equation}
In (\ref{Fick 0}) the Fick law (relating the variation of the density
to the divergence of the current) appears as the ratio of two terms
going  {\bf at least formally} to zero, because under the hypothesis
$\overline{ E^\vae \fe}=\overline{ E^\vae}\,\overline{ \fe}$ one has
\begin{equation*}
\overline{\fint E^\vae \fe dx}= -\fint \nabla
\overline{\Phi^\vae}(t,x)\overline{\fe}(t,v) dx=0\,.
\end{equation*}
The justification of the quasilinear approximation would be the proof
that  
\begin{equation}
 \overline{ \fint \frac {  E^\vae \fe} {\vae} dx} = -\mathbb
 D(t,v) \nabla_v \overline{\fe}\,,
 \label{FickDiff}
\end{equation} 
with $\mathbb D (t,v)$ being  a non negative diffusion matrix.

\subsection{Obstruction to the convergence  to a non degenerate diffusion matrix}
The Liouville equation is the paradigm of an Hamiltonian system while
the diffusion equation is the model of an irreversible phenomena. Then
a paradox comes from  the comparison of the two equations 
\begin{equation}\label{paradox}
  \frac 12 \frac{d}{dt}\int_{\R^d_v} dv \fint dx \,
  |\fe|^2 =0 \quad \hbox{and}\quad  \frac 12  \frac{d}{dt}
  \int_{\R^d_v}dv \fint dx \, |\overline{\fe}|^2
  +\int_{\R^d_v}dv \fint dx \, \big(\mathbb D(t,v) \nabla_v
  \overline{\fe} \,,\, \nabla_v \overline{\fe}\big) =0\,.
 \end{equation}
This paradox has to be resolved to justify, in some cases, that the diffusion matrix
$\mathbb D$ is not degenerate, taking in account for instance the
following 
\begin{proposition}\label{preremarques}
Assuming that  $E^\vae=-\nabla \Phi^\vae$ is  uniformly bounded in
$L^\infty(0,T ; W^{1,\infty}(\torus))\,,$ and that $ {\fe}(t,v)$ is
solution of (\ref{eqn:RL1} )with  $\overline{\fe(0,x,v)}=\fint
f_0^\vae(x,v)dx$, one has the following facts.
\begin{enumerate}
\item The density $f^\vae(t,x,v)$   converges strongly  in
  $L^2([0,T]\times\torus \times\R^d_v )$ if and only if 
\begin{equation*}
\int_0^Tdt \int_{\R_v^d} dv \, \big(\mathbb D(t,v)\nabla_v
\overline{\fe}(t,v) \,,\, \nabla_v \overline{\fe}(t,v)  \big)=0\,.
\end{equation*}
\item If $\del_t \Phi^\vae (t,x)$ is bounded in some distribution
  space $L^1(0,T; H^{-\beta}(\torus))$ with some $\beta$ finite and
  if $\vae\del_t \Phi^\vae$ converges to $0$ in $L^1([0,T]\times
  \torus)$ then one obtains
\begin{equation*}
\del_t\overline{\fe}=0 \quad \hbox{and}\quad  v\cdot \overline{
  \fint \frac {  E^\vae \fe} {\vae} dx} =0 \quad \hbox{on} \quad
    [0,T]\times \R^d_v\,.
\end{equation*}
\end{enumerate}
\end{proposition}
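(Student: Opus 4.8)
The plan is to read both points off the two balances displayed in (\ref{paradox}), integrated in time. For point 1, write $\|g\|^2:=\int_0^T\!\int_{\R^d_v}dv\fint dx\,|g|^2$. Integrating the first identity in (\ref{paradox}) (conservation of the $L^2$ norm by the Liouville flow) gives $\|f^\varepsilon\|^2=T\int_{\R^d_v}dv\fint dx\,|f_0|^2$, independent of $\varepsilon$. Integrating the second identity in (\ref{paradox}), and using that the cluster point $\overline{f^\varepsilon}$ is $x$-independent with $\overline{f^\varepsilon}(0)=\fint f_0\,dx$ (well-prepared data), gives
\begin{equation*}
\|\overline{f^\varepsilon}\|^2=T\int_{\R^d_v}dv\fint dx\,|f_0|^2-2\int_0^T(T-s)\Big(\int_{\R^d_v}\big(\mathbb D(s,v)\nabla_v\overline{f^\varepsilon},\nabla_v\overline{f^\varepsilon}\big)dv\Big)ds.
\end{equation*}
Since $f^\varepsilon\rightharpoonup\overline{f^\varepsilon}$ weakly, $\|f^\varepsilon-\overline{f^\varepsilon}\|^2=\|f^\varepsilon\|^2-2\langle f^\varepsilon,\overline{f^\varepsilon}\rangle+\|\overline{f^\varepsilon}\|^2$ tends to $\lim\|f^\varepsilon\|^2-\|\overline{f^\varepsilon}\|^2$, so strong convergence is equivalent to equality of the two norms, i.e. to the vanishing of the nonnegative gap $2\int_0^T(T-s)(\cdots)\,ds$. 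As $\mathbb D\ge0$, this holds if and only if $\int_0^T\!\int_{\R^d_v}(\mathbb D\nabla_v\overline{f^\varepsilon},\nabla_v\overline{f^\varepsilon})\,dv\,ds=0$, which is point 1.

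For point 2 I would first upgrade the convergence of the field. The bound $E^\varepsilon\in L^\infty(0,T;W^{1,\infty})$ makes $\Phi^\varepsilon$ bounded in $L^\infty(0,T;W^{2,\infty})$, hence space-compact, while $\partial_t\Phi^\varepsilon\in L^1(0,T;H^{-\beta})$ provides time-equicontinuity; by an Aubin--Lions--Simon argument $\Phi^\varepsilon$ and $E^\varepsilon$ converge strongly (up to a subsequence) in $C([0,T];H^s)$ for some $s>0$, hence in $L^2([0,T]\times\torus)$. The conclusion $v\cdot\overline{\fint E^\varepsilon f^\varepsilon/\varepsilon\,dx}=0$ then follows from the exact identity obtained by dotting the flux with $v$, integrating by parts in $x$, and substituting $v\cdot\nabla_x f^\varepsilon$ from (\ref{eqn:RL1}):
\begin{equation*}
v\cdot\fint\frac{E^\varepsilon f^\varepsilon}{\varepsilon}\,dx=-\varepsilon\fint\Phi^\varepsilon\partial_t f^\varepsilon\,dx-\nabla_v\cdot\fint\Phi^\varepsilon E^\varepsilon f^\varepsilon\,dx.
\end{equation*}
The first term tends to $0$ after moving $\partial_t$ off $f^\varepsilon$ and using $\|f^\varepsilon\|_\infty\le\|f_0\|_\infty$ together with $\varepsilon\partial_t\Phi^\varepsilon\to0$ in $L^1$; in the second, the strongly convergent product $\Phi^\varepsilon E^\varepsilon$ pairs with the weakly convergent, $x$-independent $\overline{f^\varepsilon}$, and since $\fint\Phi^\varepsilon E^\varepsilon\,dx=-\tfrac12\fint\nabla_x|\Phi^\varepsilon|^2\,dx=0$ the limit factorizes to $\nabla_v\cdot\big(\overline{f^\varepsilon}\fint\overline{\Phi^\varepsilon E^\varepsilon}\,dx\big)=0$.

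The remaining conclusion $\partial_t\overline{f^\varepsilon}=0$ is, by (\ref{Fick 0}), equivalent to the limiting flux $\overline{\fint E^\varepsilon f^\varepsilon/\varepsilon\,dx}$ being divergence-free in $v$, and it is here that the real difficulty lies. Strong convergence of $E^\varepsilon$ alone does not suffice, because $\fint E^\varepsilon f^\varepsilon/\varepsilon\,dx=\varepsilon^{-1}\fint E^\varepsilon\big(f^\varepsilon-\fint f^\varepsilon dx\big)\,dx$ is an indeterminate ratio whose limit requires controlling the $x$-fluctuation of $f^\varepsilon$ to first order in $\varepsilon$. I would insert the rescaled Duhamel expansion for that fluctuation, which represents the flux through the two-time autocorrelation $\varepsilon^{-2}\int_0^t\fint E^\varepsilon(t,x)\otimes E^\varepsilon\big(\sigma,x-v(t-\sigma)/\varepsilon^2\big)\,dx\,d\sigma$ acting on $\nabla_v\overline{f^\varepsilon}$. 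The main obstacle is precisely the analysis of this resonant time-integral: the hypotheses $\partial_t\Phi^\varepsilon\in L^1(0,T;H^{-\beta})$ and $\varepsilon\partial_t\Phi^\varepsilon\to0$ must be used to show that the field carries no fast-time spectrum, so that no resonance broadening occurs and the symmetric (dissipative) part of the limiting operator degenerates, leaving a flux with vanishing $v$-divergence. This is the genuinely quasilinear step, and the one in which the time-singularities of the potential, rather than its space regularity, are decisive.
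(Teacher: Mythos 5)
Your treatment of point 1 matches the paper's: both rest on comparing the two balances in (\ref{paradox}) together with the weak lower semicontinuity of the $L^2$ norm, so that the defect between $\lim\|\fe\|^2$ and $\|\overline{\fe}\|^2$ is exactly the accumulated dissipation, which vanishes if and only if the convergence is strong. Likewise, your derivation of $v\cdot\overline{\fint E^\vae\fe/\vae\,dx}=0$ is essentially the paper's own computation: test the rescaled equation against $\vae\,\Phi^\vae\theta(t)\phi(v)$, kill the time-derivative term using $\vae\,\del_t\Phi^\vae\to0$ in $L^1$, and kill the remaining term by Aubin--Lions compactness together with $\fint\nabla_x|\Phi^\vae|^2dx=0$ and the $x$-independence of $\overline{\fe}$.

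The gap is in the last conclusion, $\del_t\overline{\fe}=0$, which you declare to be ``the real difficulty'' and for which you only sketch a program (Duhamel expansion, analysis of the resonant two-time correlation, absence of fast-time spectrum) without carrying it out; as written your proposal does not prove the claim. No such machinery is needed: the paper deduces $\del_t\overline{\fe}=0$ directly from the identity $v\cdot\overline{J}=0$ that you have already established, where $\overline{J}:=\overline{\fint E^\vae\fe/\vae\,dx}$, by a purely local argument in the $v$ variable. Any test vector field $\psi\in\mathcal D(\R^d_v;\R^d_v)$ with $\psi(0)=0$ factors as $\psi(v)=\varphi(v)v$ with $\varphi$ compactly supported (formula (\ref{trivial3})), so the limiting flux annihilates every such $\psi$; in particular it annihilates $\nabla_v\phi$ for every $\phi$ whose support avoids the origin, hence $\del_t\overline{\fe}=-\nabla_v\cdot\overline{J}$ is a distribution supported in $[0,T]\times\{v=0\}$. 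Since $\overline{\fe}$ is bounded in $L^\infty$, the vanishing of $\int\overline{\fe}\,\del_t\theta\,\phi\,dvdt$ for $\phi$ supported away from $v=0$ extends by density to all $\phi\in\mathcal D(\R^d_v)$ (relation (\ref{stupide})), which gives $\del_t\overline{\fe}=0$ on all of $[0,T]\times\R^d_v$. You should replace your final paragraph by this support-plus-regularity argument; the Duhamel and resonance analysis you outline belongs to the later sections of the paper, where one tries to identify a nonzero diffusion matrix, not to this degeneracy statement.
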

\begin{proof}
The point 1 is a direct consequence of the comparison between the
classical Hilbertian estimate 
\begin{equation*}
  \forall t \in (0,T), \quad \int_0^t  \overline{ \|\fe(s)
    \|^2_{L^2(\torus\times \R^d_v)}} ds\ge \int_0^t \|\,\,
  \overline{\fe(s)}\,\, \|^2_{L^2(\torus\times \R^d_v)} ds\,,
\end{equation*}
and the two equations appearing in the formula (\ref{paradox}).
For the point 2,  multiply the rescaled Liouville equation
by $\vae \Phi^\vae(t,x)\theta(t) \phi(v) $ to obtain after
integration 
\begin{equation}
\begin{aligned}
&\int_{\R^+_t}\int_{\R^d_v}  \theta (t) \phi(v)\fint
  \frac{v\cdot\nabla_x \fe(t,x,v) \Phi^\vae(t,x) dx}{\vae}dvdt  \\ & =
  \int_{\R^+_t} \int_{\R^d_v} \theta (t)\nabla_v  \phi(v)\cdot
  \fint   E^\vae(t,x)  \Phi^\vae(t,x) \fe(t,x,v) dx dvdt
  \\ &+\vae \int_{\R^+_t} dt\fint dx \int_{\R^d_v} dv  \,\fe
  (t,x,v) \del_t (\theta (t) \Phi^\vae(t,x))\phi(v)
  \,.\label{trivial1}
\end{aligned}
\end{equation}
Now, since $\vae \del_t\Phi^\vae(t,x)$ converges strongly to zero in  $L^1([0,T] \times \torus )\,$
as $\vae \rightarrow 0\,,$ the last term of the
right-hand side of  (\ref{trivial1}) goes to $0$ as $\vae\rightarrow 0 $,
while for the first term of the right-hand side of  (\ref{trivial1}) with the Aubin-Lions theorem (see for
instance \cite{Sim87})  one obtains
\begin{equation}
\begin{aligned}
&\overline{ \int_{\R^+_t}\int_{\R^d_v} \theta (t)\nabla_v
    \phi(v)\cdot \fint   E^\vae(t,x)  \Phi^\vae(t,x) \fe(t,x,v)
    dx dvdt}\\ &= - \int_{\R^+_t} \int_{\R^d_v}\theta (t)\nabla_v
  \phi(v)\cdot \fint   \frac 12 \overline{ \nabla_x
    |\Phi^\vae(t,x)|^2} \overline{\fe(t,x,v)} dx dvdt
  \\ &=-\int_{\R^+_t}\int_{\R^d_v} \theta (t)\overline{\fe}(t,v)
  \nabla_v\phi(v)\cdot \fint   \frac 12 \nabla_x \overline{
    |\Phi^\vae(t,x)|^2}  dx =0\,.  &
\end{aligned} \label{trivial2}
\end{equation} 
Eventually, using  $E^\vae=-\nabla \Phi^\vae$ and an integration by
parts in the variable $x$, for the left-hand side of (\ref{FickDiff}) we obtain from
(\ref{trivial1})-(\ref{trivial2}),
\begin{equation}\label{trivial4}
  \begin{aligned}
    &\overline{ \int_{\R^+_t}\int_{\R^d_v} \theta (t) \phi(v)  v\cdot
      \fint \frac{  E^\vae(t,x)   \fe(t,x,v)}{\vae} dx
      dvdt}\\ &=-\overline{ \int_{\R^+_t}\int_{\R^d_v} \theta (t)
      \phi(v) \fint   \frac{v\cdot   \nabla_x \Phi^\vae(t,x)
        \fe(t,x,v)}{\vae}  dx dvdt}\\ &=
    \overline{\int_{\R^+_t}\int_{\R^d_v}   \theta (t) \phi(v)
      \fint \frac{v\cdot \nabla_x \fe(t,x,v)
        \Phi^\vae(t,x)}{\vae}  dx dv dt} =0\,.
  \end{aligned}
\end{equation}
Then, one observes that any vector-valued function $v\mapsto \psi(v)
\in \mathcal D(\R^d_v; \R_v^d)$, with $\psi(0)=0\,$, can be written
with the introduction of a function $v\mapsto \gamma (v)\in \mathcal
D(\R^d_v)$ equal to $1$ on the support of $\psi(v)$ as
\begin{equation}
\begin{aligned}
&\psi(v)= \left(\int_0^1 \nabla_v\psi(sv) ds\right) v =  \gamma
(v)\left(\int_0^1 \nabla_v\psi(sv) ds\right)v= \varphi(v) v\\
&\hbox{
  with }\,  \, \varphi(v):= \gamma(v)\int_0^1\nabla_v\psi(sv)
ds\,. \label{trivial3}
\end{aligned}
\end{equation}
From (\ref{trivial4})-(\ref{trivial3}) one concludes that for any such
vector-valued function $v\mapsto \psi(v)$ with $\psi(0)=0\,,$ one obtains
\begin{equation*}
 \int_{\R^+_t} \theta (t)  \int_{\R^d_v}  \psi(v) \cdot \overline{
   \fint \frac{  E^\vae(t,x)   \fe(t,x,v)}{\vae} dx} dvdt =0\,.
\end{equation*}
Therefore the support of $\del_t  \overline{\fe}(t,v)$ is contained in
$[0,T] \times \{v=0\}\,.$  Hence for any $\theta (t) \in \mathcal D
(\R_t^+)$ and for any $\phi(v)\in \mathcal D(\R^d_v)$ with the point
$0$ not included in the support of $\phi(v)$, one obtains 
\begin{equation} 
\int_{\R^+_t\times \R^d_v}\overline{\fint f^\vae dx} \del_t
\theta (t)  \phi(v)dvdt=0\,. \label{stupide}
\end{equation}
However, since $\overline{\fint f^\vae dx} \in
L^\infty(\R^+_t\times \R^d_v) $ relation (\ref{stupide}) remains valid
for any test function $\phi\in \mathcal D(\R^d_v)$ and this completes
the proof of the point 2.
\end{proof}
\begin{remark}
For Vlasov--Poisson equations the ``ergodic" convergence of $\, \fe
(t,x,v)\, $ to $ \,\overline{\fe(t,v)}\, $, which has already been proven,
implies (using the Poisson equation) the weak convergence to zero of the electric field
$E^\vae(t,x)\,$. This is in this scaling the ``baby Landau damping".
Strong convergence will be equivalent to the genuine Landau damping as
proven in Theorem~\ref{babydamping}  of Section~\ref{bbd}.
 As a consequence for the electric  field $E^\vae\,,$ time regularity
 may  prevent a limit described by a non degenerate diffusion
 equation.
\end{remark}
\begin{remark}
From the above observations one concludes that, as it is the case in
many related examples,  proofs should rather involve the behavior of
the solution itself rather than the asymptotic structure of the
equation. Therefore the use of the Duhamel expansion  (up to
convenient order) appears to be a natural tool and it will appear in
two very different approaches. The first one is based on the
introduction of stochasticity  in the electric field (see
Section~\ref{Sec:NSCSA}). Hence it corresponds to a situation where
such electric field is non selfconsistent: it is a genuine Liouville
equation.  The second approach, based on a short time asymptotic (see
Section~\ref{RMQA}), deals with configurations where the electric
field is given selfconsistently (i.e. determined from the density of
particles) through a spectral analysis.
\end{remark}
 
\subsection{The first iteration of the Duhamel formula and the
  diffusion - Reynolds electric stress tensor}
From the previous section one deduces that the convergence to a
genuine diffusion equation requires that the vector field $E^\vae$ or
the potential $\Phi^\vae$ becomes ``turbulent" as $\vae\rightarrow0\,$.
This justifies at present the introduction of the diffusion
tensor $\mathbb D^\vae$. Using the Duhamel formula
\begin{equation}
  \label{eqn:PDF}
  f^\varepsilon(t)=  S_t^\varepsilon f_0^\varepsilon - \frac1\vae
  \int_0^t  S_{t-\sigma}^\varepsilon E^\vae (\sigma)\cdot \nabla_v
  f^\varepsilon(\sigma) d\sigma\,,
\end{equation}
and the ergodicity (see Proposition~\ref{preremarques}), the first
term of the right-hand side of (\ref{eqn:PDF}) is ignored, while
multiplying by a test function $\phi(v)\,,$ one obtains for the Fick term
the following expression,
\begin{equation}\label{weakfubini}
-\overline{\fint dx \int_{\R^d_v}dv \,\phi(v)  \nabla_v\cdot
  \left( \frac{E^\varepsilon
    f^\varepsilon}{\varepsilon}\right)}=-\frac{1}{\vae^2}
\overline{\int_{\R^d_v} dv\, \nabla_v \phi(v) \cdot \int_0^tds\,
  \fint dx\, E^\vae(t)   S_{t-s}^\varepsilon (E^\vae
  (s)\cdot\nabla_v f^\varepsilon(s))}\,.
\end{equation}

Using the explicit formula,
\begin{equation}
S^\vae_{t-s}(E^\vae(s)\cdot \nabla_v f^\vae(s))= E^\vae(s,
x-v(t-s)/\vae^2)\cdot (\nabla_v f^\vae)(s,x-v(t-s)/{\vae^2},v)\,,
\end{equation}
the change of variable $\sigma= ({t-s})/{\vae^2}$, and  the
$2\pi-$periodicity of the functions $E^\vae$ and $\fe$, one obtains from (\ref{weakfubini})
\begin{equation}\label{biduhamel}
\begin{aligned}
&-\overline{\fint dx \int_{\R^d_v}dv\, \phi(v)   \nabla_v \cdot
    \left(\frac{E^\varepsilon f^\varepsilon}{\varepsilon}\right)}\\ &=
 \! \overline{\int_{\R^d_v} dv\, (\nabla_v \phi(v))^T
    \int_0^{\frac{t}{\vae^2}} d\sigma \fint dx \, E^\vae(t,x+
    \sigma v)\otimes E^\vae (t-\vae^2\sigma,x) \nabla_v f^\varepsilon(
    t-\sigma\vae^2,x,v)}\\ 
    &=  \! \overline{\int_{\R^d_v}dv
    \fint dx  \int_0^{\frac{ t}{\vae^2}}d\sigma
    f^\varepsilon(t-\sigma\vae^2, x,v   ) \nabla_v \cdot \big(  E^\vae
    (t-\vae^2\sigma,x)   \otimes  E^\vae(t,x+ \sigma v)    \nabla_v
    \phi(v) \big)}\,.
 \end{aligned}
\end{equation}
Observe that the above integrations by part are justified on the
following ground: \\
i) All the arguments, $E^\vae$ and $\fe$, are
assumed for $\vae>0$ to be smooth enough.  \\
ii) The smooth test function $\phi$ is independent of $x$ and $t$.\\

Further analysis may be decomposed in four steps.
 
\begin{enumerate}
\item In (\ref{biduhamel}) replacing $\fe$ by an other smooth test
  function, one introduces the diffusion tensor $\mathbb D^\vae(t,v)$ defined by
\begin{equation*}
\begin{aligned}
&\int_{\R^d_v} dv\,  \big(\mathbb D^\vae (t,v)\nabla_v \psi(v) \,,\, \nabla_v
  \phi(v) \big) =\int_{\R^d_v} dv\,  \big(\nabla_v \psi(v) \,,\, \mathbb
  D^\vae(t,v)^T\nabla_v \phi(v) \big) \\ &=\int_{\R^d_v}dv \,    (\nabla_v
  \psi(v) )^T\Big(\fint dx  \int_0^{\frac{ t}{\vae^2}}
  d\sigma\ E^\vae (t-\vae^2\sigma,x) \otimes E^\vae(t,x+ \sigma v)
  \Big)\nabla_v  \phi(v)\,.
\end{aligned}
\end{equation*}
\item Eventually observe that under a decorrelation hypothesis and
  with 
\begin{equation*}
\overline{ f^\varepsilon(t-\sigma\vae^2, x,v)} =\overline{\fe}(t,v)\,,
\end{equation*}
one would obtain
\begin{equation*}
  \label{stresstensor}
 \begin{aligned}
&\overline{\int_{\R^d_v}dv  \fint dx
     \int_0^{\frac{ t}{\vae^2}}d\sigma  f^\varepsilon(t-\sigma\vae^2
     ,x,v) \nabla_v \cdot \big( E^\vae (t-\vae^2\sigma,x) \otimes
     E^\vae(t,x+ \sigma v) \nabla_v  \phi(v) \big)} \\ &=
   \int_{\R^d_v}    \overline{ f^\varepsilon }(t,v) \nabla_v \cdot
    \big(\overline{ \mathbb D^\vae(t,v)^T}  \nabla_v  \phi(v) \big)
   dv \,.
\end{aligned}
\end{equation*}
  
\item With the notation 
$$ E^\vae (t-\vae^2\sigma,x) \otimes E^\vae(t,x+ \sigma v)=E(T,X)
  \otimes E(S,Y)_{T=t-\vae^2\sigma,\, X=x,\, S=t,\, Y=x+\sigma v}\,,
$$ appears some type of average of the tensor which in the present
  field ("plasma turbulence") plays a role very similar to the
  Reynolds hydrodynamic stress tensor $u(t,x)\otimes u(s,y)$ in fluid turbulence. 

\item Since  the electric field is the gradient of a real $x$-periodic
  potential, i.e.  $E^\vae(t,x)=-\nabla\Phi^\vae(t,x)$,  with the
  notation
\begin{equation*}
E^\vae(t,k)=\fint E^\vae (t,x)e^{-ik\cdot x} dx\,\quad
\hbox{and} \quad\Phi^\vae(t,k)=\fint \Phi(t,x)e^{-ik\cdot x} dx\,,
\end{equation*}
one has
\begin{equation*}
 \mathbb D^\vae(t,v) =\sum_{k\in \mathbb Z^d\backslash\{0\}}k\otimes
 k\int_0^{\frac{ t}{\vae^2}}   \Phi^\vae(t-\vae^2\sigma,k)
 (\Phi^\vae(t ,k) )^\star e^{-ik\cdot v\sigma}d\sigma\,,
\end{equation*}
with 
\begin{equation*}
  \sum_{k\in \mathbb Z^d} |k|^4|\Phi^\vae(t,k)|^2 \le C\ \hbox{
    independent of }\vae\,.
\end{equation*}
\end{enumerate}
In order to compare deterministic results of this section with the
stochastic ones of Section~\ref{Sec:NSCSA}, we state and prove the
following
\begin{proposition}
  \label{prop:DetLim}
We assume  that the Fourier coefficients of the potential
$\Phi^\vae(t,x)$ are given by the ansatz
\begin{equation*}
\Phi^\vae(t,k) ={\underline\Phi}^\vae(t,k)e^{-i \omega(k)
  \vae^{-\beta_k} t}\,,
\end{equation*}
where $ \omega(k) \vae^{-\beta_k}$ is a fast time frequency, while the
amplitude $\underline{\Phi}^\vae(t,k)$ is slowly varying with time and
more precisely satisfies the estimate
\begin{equation}
  \label{RegDec}
\sum_{k\in \mathbb Z^d} |k|^4|{\underline\Phi}^\vae(t,k)|^2 \le
C\  \hbox{ independent of } \vae\,.
\end{equation}
Then,
\begin{enumerate}
\item The diffusion tensor $\mathbb D^\vae$ is  given by
\begin{equation*}
\begin{aligned}
\mathbb D^\vae(t,v)&  =\fint dx
\int_0^{\frac{ t}{\vae^2}}d\sigma \, E^\vae(t,x+ \sigma  v)
\otimes    E^\vae (t-\vae^2\sigma ,x)\\ &=\sum_{k\in\mathbb Z^d}
k\otimes k \ \int_0^{\frac{ t}{\vae^2}}d\sigma\,
|\underline{\Phi}^\vae(t,k)|^2  e^{ -i(\vae^{2-\beta_k}\omega(k) -
  k\cdot v)\sigma} \\ &=\sum_{k\in\mathbb Z^d} k\otimes k
\  |\underline{\Phi}^\vae(t,k)|^2 \  \frac{
  \sin\big((\vae^{2-\beta_k}\omega(k) - k\cdot v)\frac t{\vae^2}\big)}
   {\vae^{2-\beta_k}\omega(k) - k\cdot v}\,.
\end{aligned}     
\end{equation*}
\item Using the definition of the following hyperplanes,
  \[
  \pi_k^0 = \{ v \in \R^d_v \ \hbox{ such that } \ k\cdot v=0 \}\,,
  \]
  and
  \[
  \pi_k^1 = \left\{ v \in \R^d_v \ \hbox{ such that }
  \ \omega(k)-k\cdot v=0 \ \mbox{ or } \  k\cdot(\vec{\omega}(k)-v)=0
  \ \mbox{ with } \ \vec{\omega}(k): = \frac{\omega(k)}{|k|}
  \frac{k}{|k|} \right\}\,,
  \]
  for any $\psi, \, \phi \in \mathcal D(\R^d_v) $ one obtains the
  following behavior as  $\vae\rightarrow 0\,$:
\begin{equation*}
\begin{aligned} 
&\int_{\R_v^d}(\nabla_v \phi(v))^T\,\overline{ \mathbb
    D^{\vae}(t,v)}\nabla_v\psi(v) \,dv =  \pi \sum_{k\not=0,\,
    \beta_k<2}  \overline{|\underline{\Phi}_k^\vae |^2} \int_{\pi_k^0}
  k\cdot \nabla_v\phi(v)\, k\cdot \nabla_v\psi(v)\, dv \\ &  +\pi
  \sum_{k\not=0,\, \beta_k=2} \overline{  |\underline{\Phi}_k^\vae|^2
  } \int_{\pi_k^1} k\cdot\nabla_v\phi(v)\, k\cdot
  \nabla_v\psi(v)\,dv\,.
\end{aligned}
\end{equation*}
\end{enumerate}
\end{proposition}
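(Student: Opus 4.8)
The plan is to read Part~1 as an exact Fourier computation and Part~2 as an oscillatory-integral (Dirichlet kernel) limit fed by the bound (\ref{RegDec}).

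\textbf{Part 1.} I would start from the definition
\[
\mathbb D^\vae(t,v)=\fint dx\int_0^{t/\vae^2}d\sigma\,E^\vae(t,x+\sigma v)\otimes E^\vae(t-\vae^2\sigma,x),
\]
expand both fields in Fourier series in $x$, and use $E^\vae(t,k)=-ik\,\Phi^\vae(t,k)$ coming from $E^\vae=-\nabla\Phi^\vae$. Averaging the tensor product over $\torus$ annihilates every pair of modes except $k'=-k$, by orthogonality of the $e^{ik\cdot x}$; this is where the $x$-integration does all the work. Using that $\Phi^\vae$ is real, so that $\Phi^\vae(t,-k)=\overline{\Phi^\vae(t,k)}$, the surviving coefficient is $(-ik)\otimes(ik)=k\otimes k$ times $\Phi^\vae(t,k)\overline{\Phi^\vae(t-\vae^2\sigma,k)}$. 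Substituting the ansatz combines the two fast phases into the single factor $e^{-i\vae^{2-\beta_k}\omega(k)\sigma}$, which together with the translation phase $e^{ik\cdot v\sigma}$ gives $e^{-i(\vae^{2-\beta_k}\omega(k)-k\cdot v)\sigma}$. Invoking the slow time-variation of the amplitude to replace $\underline\Phi^\vae(t-\vae^2\sigma,k)$ by $\underline\Phi^\vae(t,k)$ (exact if the amplitude is frozen, otherwise controlled by the slow-variation hypothesis) produces the modulus $|\underline\Phi^\vae(t,k)|^2$. The remaining integral $\int_0^{t/\vae^2}e^{-ia\sigma}d\sigma$ is elementary; since the $k$ and $-k$ terms are complex conjugates, the imaginary parts cancel and one reads off the real part $\sin(a\,t/\vae^2)/a$ with $a=\vae^{2-\beta_k}\omega(k)-k\cdot v$, which is the claimed formula.

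\textbf{Part 2.} Pairing the tensor with $\nabla_v\phi,\nabla_v\psi$ turns $(\nabla_v\phi)^T(k\otimes k)\nabla_v\psi$ into $(k\cdot\nabla_v\phi)(k\cdot\nabla_v\psi)$, so the bilinear form becomes
\[
\sum_{k\neq0}|\underline\Phi^\vae(t,k)|^2\int_{\R^d_v}(k\cdot\nabla_v\phi)(k\cdot\nabla_v\psi)\,\frac{\sin(a_k^\vae\,t/\vae^2)}{a_k^\vae}\,dv,\qquad a_k^\vae=\vae^{2-\beta_k}\omega(k)-k\cdot v.
\]
The core is the weak limit in $v$ of the kernel $\sin(a_k^\vae t/\vae^2)/a_k^\vae$. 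I would use the coarea change of variables $s=k\cdot v$, $dv=|k|^{-1}ds\,d\mathcal H^{d-1}$, reducing matters to the one-dimensional Dirichlet kernel $\sin(N(c-s))/(c-s)\rightharpoonup\pi\delta(s-c)$ as $N=t/\vae^2\to\infty$. Splitting by the size of $\beta_k$ gives three regimes: for $\beta_k<2$ the shift $c=\vae^{2-\beta_k}\omega(k)\to0$, so the mass concentrates on $\{k\cdot v=0\}=\pi_k^0$; for $\beta_k=2$ the shift is exactly $\omega(k)$ and the mass concentrates on $\{k\cdot v=\omega(k)\}=\pi_k^1$; for $\beta_k>2$ one has $|a_k^\vae|\gtrsim\vae^{2-\beta_k}|\omega(k)|\to\infty$ uniformly on the compact support of the test functions, so $|\sin(a_k^\vae t/\vae^2)/a_k^\vae|\lesssim\vae^{\beta_k-2}\to0$ and the mode drops out. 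Reading $\int_{\pi_k^0}\cdots dv$ and $\int_{\pi_k^1}\cdots dv$ as integration against $\delta(k\cdot v)$ and $\delta(\omega(k)-k\cdot v)$ (which carry the $|k|^{-1}$ Jacobian) reproduces the two sums with the prefactor $\pi$. Finally, passing to a subsequence along which each $|\underline\Phi^\vae(t,k)|^2$ and $\overline{\mathbb D^\vae}$ converge weak$-\star$ replaces $|\underline\Phi^\vae(t,k)|^2$ by its cluster point $\overline{|\underline\Phi_k^\vae|^2}$.

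\textbf{Main obstacle.} Two points need care. First, in each $\beta_k<2$ mode the Dirichlet-kernel limit is a genuine \emph{joint} limit, since $N=t/\vae^2\to\infty$ and the center $c=\vae^{2-\beta_k}\omega(k)\to0$ simultaneously; I would handle this by writing the inner integral as $\int\tilde g(c-r)\,\sin(Nr)/r\,dr$ and using that $\tilde g$ is smooth with $\tilde g(c-\cdot)\to\tilde g(-\cdot)$ uniformly, together with $\sin(Nr)/r\rightharpoonup\pi\delta_0$. Second, and more delicate, is the interchange of the limit with the infinite sum over $k$: one must dominate each mode's contribution uniformly in $\vae$ by a summable sequence. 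This is precisely where (\ref{RegDec}) enters, since the uniform bound $\sum_k|k|^4|\underline\Phi^\vae(t,k)|^2\le C$ absorbs the powers of $|k|$ produced by the two gradients $k\cdot\nabla_v\phi$, $k\cdot\nabla_v\psi$ and by the $C^1$ norm that controls the Dirichlet integral, so that dominated convergence in the counting measure applies. I expect this uniform-in-$\vae$ mode estimate, rather than the single-mode limit, to be the real work.
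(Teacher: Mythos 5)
Your proposal is correct and follows essentially the same route as the paper's own (very terse) proof: Fourier expansion of $E^\vae=-\nabla\Phi^\vae$, the distributional limit of the Dirichlet kernel $\sin(a\,t/\vae^2)/a\rightharpoonup\pi\delta$, and the vanishing of the $\beta_k>2$ modes. You merely supply the details the paper leaves implicit (the joint limit for $\beta_k<2$, the $|k|^{-1}$ Jacobian, and the use of (\ref{RegDec}) to justify exchanging the limit with the sum over $k$), all of which are handled appropriately.
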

\begin{proof}
  As above the proof uses the Fourier expansion of the vector field
  $E^\vae(t,x)=-\nabla \Phi^\vae (t,x)\,$, the limit in the sense of
  distribution of the function $\sin({s}/{\vae})$, and finally the
  fact that for $\beta_k>2$ one obtains
  \[
  \lim_{\vae\rightarrow 0} \frac{
    \sin\big((\vae^{2-\beta_k}\omega(k)-k\cdot v)\frac t{\vae^2}\big)}
      {\vae^{2-\beta_k} \omega(k)-k\cdot v  }=0\,.
  \]
\end{proof}

\begin{remark}
  In point 2 of Proposition~\ref{prop:DetLim}, one observes
  that under the decreasing condition (\ref{RegDec}), the cluster point
  $\overline{\mathbb D^\vae}$ is the distribution
  \[
  \overline{\mathbb D^\vae}=
  \pi \!\!\!\! \sum_{k\in \Z^d, \, \beta_k<2}  k\otimes k
  \  |\underline{\Phi}^\vae(t,k)|^2\, \delta(k\cdot v)
 \ + \
   \pi \!\!\!\!  \sum_{k\in \Z^d, \, \beta_k=2}  k\otimes k
  \  |\underline{\Phi}^\vae(t,k)|^2\, \delta(\omega(k) - k\cdot v).
  \]
  In the next section, the introduction of stochasticity in the
vector field (or in the potential)  has in particular the effect of
smoothing the diffusion kernel by the introduction of regularized
densities of 
$$ \delta(\omega(k)-k\cdot v)= \delta(k\cdot(\vec{\omega}-v)),
$$ as it appears in Section~4.4 of \cite{BB} for the ``resonance broadening''--like
approximation of the
``phase velocity"  $\omega(k)/|k|\,$ (in terms of
amplitude) or $\vec{\omega}(k)$ (in terms of vector).

This is, besides physical observations concerning turbulence effects
in plasma, a complete justification to consider  in the rescaled
Liouville equation stochastic vector fields or potentials as it is
done in the next section.
\end{remark}

\section{The non selfconsistent stochastic approach}
\label{Sec:NSCSA}
Following the above remark, in this section, we  consider situations
where  vector field $E^\vae=-\nabla\Phi^\vae$ (or its potential) is a
random variable as  such being defined on a probability space
$(\Omega, \mathcal{F}, \mathbb{P})$   with $\mathbb{P}$ a
$\sigma$-finite probability  measure. The expectation of a random
variable $f$ is given by
\begin{equation}
\mathbb E[f]= \int_\Omega f(\omega )d \mathbb P(\omega)\,.
\end{equation}

First standard hypotheses well adapted to our presentation are
assumed on the vector field  $E^\vae$. 
\begin{itemize}

\item {\bf H1}. Stochastic average of $E^\vae$ set equal to $0$, i.e.
\begin{equation}
\forall\,(t,x), \quad \mathbb E [E^\vae (t,x)]=0\,.  \label{H11}
\end{equation}

\item {\bf H2}. Finite time decorrelation: there exists a finite positive number
  $\uptau$ such that 
\begin{equation}
  |t-s|  \ge \uptau  {\vae^2}\  \ \Longrightarrow \ \  \mathbb E [E^\vae(t,x) \otimes
    E^\vae(s,y)]=0, \quad \forall (x,y) \,. \label{H22}
\end{equation}

\item {\bf H3}.  Time and space homogeneity.  To emphasize the
  interplay between time oscillations and randomness one assumes, for
  the Fourier coefficients of the vector field (then also for the
  potential) the following  form,
\begin{equation}\label{Os0}
  E^\vae(t,k) =\fint E^\vae(t,x) e^{-ik\cdot x} dx =
  \underline{E}^\vae(t,k)e^{-i\frac{\omega_k t}{\vae^2}} =  -ik  \underline{\Phi}
  ^\vae(t,k) e^{-i\frac{\omega_k t}{\vae^2}}\,.
\end{equation}
Potentials $ \underline{\Phi}^\vae(t,k) $ are
complex random variables, while frequencies $\omega_k$ are real and
$(t,\,\vae)$-independent. We have also the following parity properties,
\begin{equation*}
  \label{oscil}
\forall k\in \mathbb Z^d, \quad \underline{\Phi}^\vae
(t,-k)=(\underline{\Phi}^\vae(t,k))^\star \quad \hbox{and}\quad
\omega_{-k}=-\omega_k\,.
\end{equation*}
Moreover, one assumes the following time and space homogeneity properties.
For any $k\in \mathbb Z^d\backslash \{0\}$, there exists a function
 $\sigma \mapsto A_k(\sigma)$ such that one has
 \begin{equation}
 \mathbb E[  \underline{\Phi}^\vae(t,k) \underline{
     \Phi}^\vae(s,k')] =  A_k\Big( \frac{t-s}{\vae^2}\Big) \delta(k+k')\,,
   \label{estimetstoc}
 \end{equation}
 with the following properties:
 \begin{equation}
   \label{estimetstoc2}
\sigma \mapsto A_k( \sigma) \mbox{ is even, } \ \quad  \forall |\sigma| >\uptau \Longrightarrow A_k(
 \sigma)=0\,, \quad \hbox{ and } \quad \sum_{k\in\mathbb Z^d} \int_\R |k|^3 |A_k(\sigma)|d\sigma <C_1\,,
 \end{equation}
 with $C_1$ being independent of $\vae$.
\end{itemize}
 
In the right-hand side of (\ref{estimetstoc} ) the term $A_k((t-s)/{\vae^2})$
stands for  the time homogeneity assumption, while the term $\delta(k+k')$
represents the hypothesis of space  homogeneity.
Observe that the functions $\sigma\mapsto A_k(\sigma)$ can
be extended by parity as functions defined on $\R$
and with  their Fourier transforms given by
\begin{equation}
  \hat{A}_k(s) =\int_\R A_k(\sigma) e^{-i s\sigma}
  d\sigma\,. \label{hatA}
\end{equation} 
As a consequence of the above hypotheses one obtains, for $\vae$ small enough,
\begin{eqnarray*}
   \mathbb D^{\vae}(t,v)&=&
  \int_0^{\frac{ t}{\vae^2}}d\sigma \, \mathbb E[
    E^\vae(t,x+ \sigma v)     \otimes  E^\vae
    (t-\vae^2\sigma,x)]\\
  &=&\sum_{k\in \mathbb Z^d\backslash\{ 0 \}} k\otimes k
  \int_0^{\frac{t}{\vae^2}} A_k(\sigma) e^{-i (\omega_k -k\cdot v)\sigma} d\sigma\,\\
  &=&\frac 12\sum_{k\in \mathbb Z^d\backslash\{ 0 \}} k\otimes k
  \int_\R A_k(\sigma) e^{-i (\omega_k -k\cdot v)\sigma} d\sigma\,.
\end{eqnarray*}
 
\subsection{Properties of the Reynolds electric stress tensor}

Properties of the Reynolds electric stress tensor $\mathbb D^\vae$
and of its limit as $\vae \rightarrow 0$ are collected in

\begin{proposition} \label{bbochner} Under assumptions
  {\bf H2} et {\bf H3} (see (\ref{H22})-(\ref{estimetstoc})),
  \begin{enumerate}
  \item The functions $s\mapsto \hat{A}_k(s)$ are real non negative,
   analytic and satisfy the estimate
    \begin{equation}\label{PW}
      \sup_{s\in \R}\sum_{k\in\mathbb Z^d}   |k|^3  |\hat{A}_k(s)|   < C_1\,.
    \end{equation}
  \item The limit of the Reynolds electric stress  tensor 
\begin{equation*}
 \mathbb D^\vae(t,v)  =\int_0^{\frac{ t}{\vae^2}}d\sigma\,
 \mathbb E [ E^\vae(t,x+ \sigma v)     \otimes    E^\vae
   (t-\vae^2\sigma,x)]
\end{equation*}
is a real non negative symmetric diffusion matrix, which is analytic in the variable $v$ and given by
\begin{equation}
  \label{LDV}
\mathbb D(v)=\overline{\mathbb D^\vae(t,v) }  = \frac 12 
\sum_{k\in \mathbb Z^d\backslash \{0 \}} k\otimes k \
\hat{A}_k(\omega_k- k\cdot v)\,.
 \end{equation}

\item  For any $\psi, \, \phi \in \mathcal D(\R^d_v) $, one has
\begin{equation}
  \label{point3}
  \overline{\int_{\R^+_t \times \R^d_v} (\nabla \phi(t,v))^T \,\mathbb D^\vae(t,v)
    \nabla_v \psi(t,v) dv dt} = -  \int_{\R^+_t \times \R^d_v}
  \phi(t,v) \nabla_v\cdot \big(\mathbb D(v) \nabla_v \psi(t,v) \big)dvdt\,.
\end{equation}
\end{enumerate}
\end{proposition}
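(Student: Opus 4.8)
The plan is to extract everything from the three structural facts in {\bf H3}: that each $A_k$ is an even, compactly supported, $L^1$ temporal correlation function; that such a correlation function is positive definite; and that, for $\vae$ small, $\mathbb D^\vae(t,v)$ freezes to a $\vae$- and $t$-independent expression. I would treat the three points in turn.

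For point 1, I would first argue that each $A_k$ is \emph{real}. Using the parity relation $\underline\Phi^\vae(t,-k)=(\underline\Phi^\vae(t,k))^\star$ together with (\ref{estimetstoc}), the correlation $A_k(\sigma)=\mathbb E[\underline\Phi^\vae(t,k)(\underline\Phi^\vae(s,k))^\star]$ (with $\sigma=(t-s)/\vae^2$) satisfies $\overline{A_k(\sigma)}=A_{-k}(\sigma)$ and, being positive definite, $A_k(-\sigma)=\overline{A_k(\sigma)}$; combined with the assumed evenness this forces $A_k$ real and $A_{-k}=A_k$, so that $\hat A_k(s)=\int_\R A_k(\sigma)\cos(s\sigma)\,d\sigma$ is real and even. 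Non-negativity is the conceptual core: since for any times $t_1,\dots,t_n$ and scalars $c_1,\dots,c_n$ one has $\sum_{i,j}c_i\bar c_j A_k((t_i-t_j)/\vae^2)=\mathbb E\big[\,|\sum_i c_i\underline\Phi^\vae(t_i,k)|^2\,\big]\ge 0$, the function $A_k$ is positive definite, so by Bochner's theorem its Fourier transform is a non-negative measure, which here (as $A_k\in L^1$ and continuous) is the non-negative function $\hat A_k\ge 0$. Analyticity follows from Paley--Wiener: $A_k$ is supported in $[-\uptau,\uptau]$, hence $\hat A_k$ extends to an entire function of exponential type $\uptau$. The bound (\ref{PW}) is then immediate from $\sup_s|\hat A_k(s)|\le\|A_k\|_{L^1}$ and the summability in (\ref{estimetstoc2}).

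For point 2, the key algebraic step is to pair the modes $k$ and $-k$. Since $A_k$ vanishes for $|\sigma|>\uptau$, once $t/\vae^2>\uptau$ the truncation $\int_0^{t/\vae^2}$ equals $\int_0^{\infty}$, so $\mathbb D^\vae(t,v)=\sum_{k\ne0}k\otimes k\int_0^{\infty}A_k(\sigma)e^{-i(\omega_k-k\cdot v)\sigma}\,d\sigma$. Re-indexing $k\mapsto -k$ in half the sum and using $A_{-k}=A_k$, $\omega_{-k}=-\omega_k$ converts $e^{-i(\omega_k-k\cdot v)\sigma}+e^{+i(\omega_k-k\cdot v)\sigma}$ into an integral over all of $\R$, giving exactly $\mathbb D^\vae(t,v)=\tfrac12\sum_{k\ne0}k\otimes k\,\hat A_k(\omega_k-k\cdot v)=\mathbb D(v)$ for every small $\vae$, which is (\ref{LDV}) and shows the cluster point is this fixed matrix. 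Realness and symmetry are then clear from point 1 and the symmetry of $k\otimes k$, and positivity follows since $\xi^T\mathbb D(v)\xi=\tfrac12\sum_k(k\cdot\xi)^2\hat A_k(\omega_k-k\cdot v)\ge0$. Regularity in $v$ is where I expect the real work: each term $\hat A_k(\omega_k-k\cdot v)$ is entire in $v$ (the Paley--Wiener entire function composed with an affine map), and termwise $v$-differentiation brings down powers of $k$ controlled by (\ref{PW}), which already yields $C^\infty$ smoothness of the sum. Genuine real-analyticity, however, needs local uniform convergence on a complex neighborhood, and the hard point will be propagating the exponential-type bound $|\hat A_k(\omega_k-k\cdot v)|\le\|A_k\|_{L^1}e^{\uptau|k|\,|\Im v|}$ through the series, i.e. checking that the compact-support structure of the $A_k$ supplies enough summability to keep the sum holomorphic in a strip.

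For point 3, I would use that, by point 2, $\mathbb D^\vae(t,v)=\mathbb D(v)$ as soon as $\vae^2<t/\uptau$; thus $\mathbb D^\vae(t,v)\to\mathbb D(v)$ for every $t>0$, the exceptional set $\{t<\uptau\vae^2\}$ shrinking to measure zero, while $\|\mathbb D^\vae\|_\infty\le\sum_k|k|^2\|A_k\|_{L^1}$ is a $\vae$-independent bound. Dominated convergence then lets me pass to the limit in the left-hand side of (\ref{point3}), replacing $\mathbb D^\vae$ by $\mathbb D$, and one integration by parts in $v$ (legitimate since $\phi,\psi\in\mathcal D(\R^d_v)$ are compactly supported and $\mathbb D(v)$ is at least $C^1$) moves the gradient off $\phi$ and produces $-\int\phi\,\nabla_v\cdot(\mathbb D(v)\nabla_v\psi)$, the claimed identity. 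The only genuinely delicate step in the whole proposition is the analyticity assertion of point 2; everything else is Bochner plus Paley--Wiener plus a dominated-convergence and integration-by-parts routine.
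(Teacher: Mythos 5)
Your proof follows essentially the same route as the paper's: positive definiteness of $A_k$ (obtained from the expectation of a squared modulus) plus Bochner's theorem for $\hat A_k\ge 0$, Paley--Wiener for analyticity and the bound (\ref{PW}), the $k\leftrightarrow -k$ pairing together with the compact support of $A_k$ to identify $\mathbb D^\vae(t,v)$ with $\frac 12\sum_k k\otimes k\,\hat A_k(\omega_k-k\cdot v)$ once $t>\uptau\vae^2$, and dominated convergence plus integration by parts for point~3 --- you merely supply details that the paper compresses into a citation of Bochner and Paley--Wiener. Your reservation about real-analyticity in $v$ of the \emph{summed} series under the purely polynomial weight $\sum_k|k|^3\|A_k\|_{L^1}<C_1$ is well taken (the paper does not address it either), but it bears on the sharpness of the stated hypotheses rather than on the soundness of your argument.
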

  
\begin{proof}
  The reality of $\hat{A}_k$ follows from the parity of the
  function $\sigma \mapsto A_k(\sigma)\,$. Then for any continuous
  and compactly supported function $\R\ni s\mapsto \phi(s)$, using
  (\ref{estimetstoc}) and obvious changes of variables in time, one obtains
  \begin{equation}
  \int_\R\int_\R A_k(t-s)
    \phi(s)\phi(t)dsdt =   \vae^4\int_\R
    \int_\R \mathbb E\big[ \phi(t/\vae^2) \underline{\Phi}^\vae(t,k)
      \big(\phi(s/\vae^2) \underline{ \Phi}^\vae(s,k)\big)^\star\big]dsdt\,.
    \label{preBochner}
  \end{equation}
  Observe that the left-hand side of (\ref{preBochner}) is independent of
  $\vae$, while the right-hand side is non negative.
  As a consequence one obtains
  \begin{equation*}
  \int_\R \int_\R A_k(t-s)
  \phi(s)\phi(t)dsdt \ge 0\,,
  \end{equation*}
  and the positivity of $\hat{A}_k$ follows from the Bochner
  theorem (see \cite{Yos80}).  Eventually the fact that
  functions $\hat{A}_k(s) $ are analytic (an elementary version
  of the Paley-Wiener theorem) and satisfy estimate
  (\ref{PW}) is a direct consequence of (\ref{estimetstoc2}).  In
  the same way the rest of the proof also follows directly from
  (\ref{Os0})-(\ref{estimetstoc}).
\end{proof}
     
\subsection{Decorrelation} 
 
Assuming that the electric field is a stochastic function and
introducing the expectation in the formula (\ref{biduhamel}) one
obtains
 
\begin{equation}
  \label{Ebiduhamel}
  \begin{aligned}
&-\overline{\mathbb E\left[\fint dx\int_{\R^d_v} dv\, \phi(v)
      \nabla_v \cdot\left(\frac{E^\varepsilon f^\varepsilon}{\varepsilon} \right)\right]}\\ &=
  \overline{\mathbb E\left[\int_{\R^d_v}dv   \fint dx
      \int_0^{\frac{t}{\vae^2}}d\sigma  f^\varepsilon(
      t-\sigma\vae^2,x,v ) \nabla_v \cdot \big( E^\vae (t-\vae^2\sigma,x)
      \otimes E^\vae(t,x+ \sigma v)\nabla_v  \phi(v) \big)\right]}\,,
 \end{aligned}
\end{equation}
which leads to a "smooth" well defined diffusion matrix but which
also requires a decorrelation formula more or less of the
following type,
\begin{equation*}
\begin{aligned}
 &\mathbb E \left[\int_{\R^d_v}dv      (\nabla_v \phi(v))^T \fint dx
    \int_0^{\frac{t}{\vae^2}}d\sigma\,  E^\vae(t,x+ \sigma v)
    \otimes    E^\vae (t-\vae^2\sigma,x) \nabla_v f^\vae(
    t-\sigma\vae^2,x,v )\right]\\ &\simeq \\ &  \int_{\R^d_v}dv\,
  (\nabla_v \phi(v))^T \fint dx  \int_0^{\uptau }d\sigma\,
  \mathbb E[E^\vae(t,x+ \sigma v)     \otimes  E^\vae
    (t-\vae^2\sigma,x)]\ \mathbb E [\nabla_v f^\varepsilon(
    t-\sigma\vae^2,x,v )]\,,
\end{aligned}
\end{equation*}
and this is the object of the following lemma and proposition.
\begin{lemma}\label{lemmeproba}
{(Time decorrelation property between $f^\varepsilon$ and
  $E^\varepsilon$.)}
\label{decoref}
Assume {\bf H1} and {\bf H2}  (see (\ref{H11})-(\ref{H22})).
Suppose that
the random initial data $f_0^\varepsilon$ and the electric field
$E^\varepsilon$ are independent. Then  the operator
$E^\varepsilon (s)\cdot\nabla_v$ is independent of $f^\varepsilon(t)$ as
soon as $s\geq t+\varepsilon^2 \uptau$.   
\end{lemma}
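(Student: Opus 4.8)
The plan is to reduce the asserted probabilistic independence to two ingredients: a purely deterministic causality (adaptedness) property of the transport flow, and the finite decorrelation hypothesis {\bf H2}.

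First I would record the causality of the solution. Since $\fe$ solves the first order transport equation (\ref{eqn:L1}), it is constant along the characteristics $\sigma\mapsto(X^\vae(\sigma),V^\vae(\sigma))$ of the flow $\dot X = V/\vae^2$, $\dot V = E^\vae(\sigma,X)/\vae$, which under the standing local Lipschitz regularity of $E^\vae$ in $x$ is uniquely defined. Solving backward from $(x,v)$ at time $t$ down to time $0$ gives $\fe(t,x,v)=f_0^\vae(X^\vae(0),V^\vae(0))$, and the backward trajectory on $[0,t]$ only involves the values $E^\vae(\sigma,\cdot)$ for $\sigma\in[0,t]$. Equivalently one reads this off the iterated Duhamel series, whose $n$-th term evaluates the field only at times $\sigma_1,\dots,\sigma_n\le t$. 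Hence $\fe(t)$ is a deterministic functional of $f_0^\vae$ and of the field history $\{E^\vae(\sigma):0\le\sigma\le t\}$; in other words it is measurable with respect to the $\sigma$-algebra $\mathcal G_t:=\sigma\big(f_0^\vae,\,E^\vae(\sigma):\sigma\le t\big)$.

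Next I would show that $E^\vae(s)$ is independent of $\mathcal G_t$ whenever $s\ge t+\vae^2\uptau$. On the one hand, $E^\vae(s)$ is independent of $f_0^\vae$ by the standing hypothesis that the initial datum and the field are independent. On the other hand, for every $\sigma\in[0,t]$ one has $|s-\sigma|=s-\sigma\ge s-t\ge\vae^2\uptau$, so {\bf H2} applies to the pair $\big(E^\vae(s),E^\vae(\sigma)\big)$. Granting the independence of $E^\vae(s)$ from $\mathcal G_t$, and since $\nabla_v$ is deterministic so that the only randomness carried by $E^\vae(s)\cdot\nabla_v$ sits in the factor $E^\vae(s)$, the operator $E^\vae(s)\cdot\nabla_v$ is independent of the $\mathcal G_t$-measurable object $\fe(t)$, which is the claim.

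The delicate point — and the one I would spell out with care — is the passage from the covariance-level statement of {\bf H2} to the genuine independence of the future field $E^\vae(s)$ from the whole past $\sigma$-algebra $\mathcal G_t$. Because $\fe(t)$ is a nonlinear functional of the field history, vanishing of second moments alone does not suffice: one needs the restriction of $E^\vae$ to $[t+\vae^2\uptau,\infty)$ to be probabilistically independent of its restriction to $[0,t]$. This is automatic when the field is Gaussian, where decorrelation and independence coincide, and otherwise is precisely the structural content of a finite correlation time, namely a field assembled from pieces that are independent across time gaps exceeding $\uptau\vae^2$. I would therefore state explicitly this reading of {\bf H2} as the hypothesis under which the lemma holds, after which the two steps above close the argument.
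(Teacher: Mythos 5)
Your argument is essentially the paper's own: the Duhamel formula (equivalently, the characteristics) shows $f^\varepsilon(t)$ is a functional of $f_0^\varepsilon$ and of $\{E^\varepsilon(\sigma):\sigma\le t\}$ only, and then the independence of the initial datum from the field together with \textbf{H2} gives the claim. Your closing caveat --- that \textbf{H2} as written is only a covariance condition and must be read as genuine independence of the field across time gaps exceeding $\uptau\varepsilon^2$ for the nonlinear functional $f^\varepsilon(t)$ to inherit it --- is a real point the paper silently elides, and making it explicit strengthens rather than changes the proof.
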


\begin{proof} 
  From the Duhamel formula
  \begin{equation}
    f^\varepsilon(t)=  S_t^\varepsilon f_0^\varepsilon -\frac1{\vae}
    \int_0^t d\sigma\, S_{t-\sigma}^\varepsilon  E^\varepsilon(\sigma)\nabla_v
    f^\varepsilon(\sigma)\,, \label{duhamel2}
\end{equation}
  we observe that $f^\varepsilon(t)$ depends only of $f_0^\varepsilon$
  and $ E^\vae(\sigma)$  for $\sigma \leq t$. Since
  $f_0^\varepsilon$ is independent of $E^\varepsilon(t)$, $\forall
  t\in\R$, and since the electric fields $E^\varepsilon(s)$ and
  $E^\varepsilon(t)$ are independent as soon as $s > t +
  \varepsilon^2 \uptau$ (assumption {\bf H2} or  (\ref{H22})),
  Lemma~\ref{decoref} follows directly from (\ref{duhamel2}).
\end{proof}  
 
In the Duhamel formula connecting the solution from the time
$t-\vae^2\uptau$ to the time $t$,
\begin{equation}
\label{duhamel3}
f^\varepsilon(t)=  S_{\varepsilon^2\uptau}^\varepsilon
f^\varepsilon(t-\varepsilon^2\uptau) - \frac1 \vae
\int_0^{\varepsilon^2\uptau}d\sigma\, S_{\sigma}^\varepsilon E^\vae(
t-\sigma)\cdot\nabla_v f^\varepsilon(t-\sigma)\,,
\end{equation}
we insert for $f^\varepsilon(t-\sigma)$ the Duhamel formula connecting
the solution from the time $t-2\vae^2\uptau$ to the time $t-\sigma$ to
obtain
\begin{multline}
  \label{dduhamel1}
  f^\varepsilon(t)=  S_{\varepsilon^2\uptau}^\varepsilon
f^\varepsilon(t-\varepsilon^2\uptau) -\frac1{\vae}
\int_0^{\varepsilon^2\uptau}d\sigma\, S_{\sigma}^\varepsilon E^\vae(
t-\sigma)\cdot \nabla_v( S_{-\sigma}^\varepsilon
S_{2\varepsilon^2\uptau}^\varepsilon
f^\varepsilon(t-2\varepsilon^2\uptau) ) \\ +
\frac1{\vae^2}\int_0^{\varepsilon^2\uptau} d\sigma
\int_0^{2\varepsilon^2\uptau-\sigma}  ds\, S_{\sigma}^ \vae E^\vae(
t-\sigma,\cdot)\nabla_v(S_{s}^\varepsilon( E^\vae(
t-\sigma-s)\cdot\nabla_v f^\varepsilon(t-\sigma-s)))\,,
\end{multline}
 which  provides the essential tool for the needed decorrelation
 property according to the following 
\begin{proposition}\label{decorelessentiel}
Assume that the vector field $E^\vae$ satisfies Hypotheses
{\bf  H1} or (\ref{H11}) and {\bf H2} or (\ref{H22}), then for the
expectation of the Fick term one obtains
\begin{multline*}
-\nabla_v\cdot \mathbb{E}\left[\fint  dx\,
      \frac{E^\varepsilon(t) f^\varepsilon(t)}{\varepsilon} \right] \\ =
    \frac1{\vae^2} \int_0^{\varepsilon^2\uptau}d\sigma \fint
    dx\, \mathbb{E} [ E^\vae(t)\cdot\nabla_v S^\vae_\sigma
      E^\vae(t-\sigma)\cdot\nabla_v
      S^\vae_{-\sigma} ] \mathbb{E} [
      f^\varepsilon(t-2\varepsilon^2\uptau)     ] +\mathbb
    E[\mu_t^\vae]\,,
\end{multline*}
with
\begin{equation*}
 \mu_t^\varepsilon =- \frac 1
    {\vae^3}\int_0^{\varepsilon^2\uptau} d\sigma
    \int_0^{2\varepsilon^2\uptau-\sigma}  ds \fint
    dx\, E^\vae(t)\cdot\nabla_v S^\vae_\sigma
      E^\vae(t-\sigma)\cdot \nabla_v
      S_{s}^\varepsilon E^\vae(t-\sigma-s)\cdot\nabla_v
      f^\varepsilon(t-\sigma-s)]\,.
\end{equation*}
\end{proposition}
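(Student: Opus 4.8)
The plan is to insert the second-order Duhamel expansion (\ref{dduhamel1}) for $f^\vae(t)$ directly into the Fick term and to evaluate the three resulting contributions one at a time with the decorrelation Lemma~\ref{lemmeproba}. Since the field is $v$-independent, I first rewrite the left-hand side as $-\nabla_v\cdot\mathbb E[\fint dx\,E^\vae(t)f^\vae(t)/\vae]=-\tfrac1\vae\,\mathbb E[\fint dx\,E^\vae(t)\cdot\nabla_v f^\vae(t)]$, using $\nabla_v\cdot E^\vae=0$, so that substituting (\ref{dduhamel1}) produces exactly three terms: one built on the free-streaming part $S^\vae_{\vae^2\uptau}f^\vae(t-\vae^2\uptau)$, one on the single-collision part carrying $f^\vae(t-2\vae^2\uptau)$, and the double-collision remainder. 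I expect the first to vanish, the second to give the stated main term, and the third to be precisely $\mathbb E[\mu_t^\vae]$ by its very definition (a direct term-by-term match, including the sign from the substitution $-\nabla_v\cdot(E^\vae(t)\,\cdot)\mapsto-E^\vae(t)\cdot\nabla_v$), so the work concentrates on the first two.

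For the free-streaming contribution $-\tfrac1\vae\mathbb E[\fint dx\,E^\vae(t)\cdot\nabla_v S^\vae_{\vae^2\uptau}f^\vae(t-\vae^2\uptau)]$, the density $f^\vae(t-\vae^2\uptau)$ is a functional of $f_0^\vae$ and of $E^\vae(\sigma)$ with $\sigma\le t-\vae^2\uptau$ only, and the deterministic group $S^\vae_{\vae^2\uptau}$ does not change this. By Lemma~\ref{lemmeproba} (that is, by H2 (\ref{H22}) applied with time gap $\vae^2\uptau$, together with the independence of $f_0^\vae$ and the field) the factor $E^\vae(t)$ is independent of this density, so the expectation factorizes; H1 (\ref{H11}), $\mathbb E[E^\vae(t)]=0$, then makes the term vanish.

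For the single-collision contribution the two fields involved are $E^\vae(t)$ and $E^\vae(t-\sigma)$ with $0\le\sigma\le\vae^2\uptau$, while the transported density is $S^\vae_{-\sigma}S^\vae_{2\vae^2\uptau}f^\vae(t-2\vae^2\uptau)$. Both field times exceed $(t-2\vae^2\uptau)+\vae^2\uptau$, so by Lemma~\ref{lemmeproba} the pair $(E^\vae(t),E^\vae(t-\sigma))$ is jointly independent of $f^\vae(t-2\vae^2\uptau)$. Conditioning on the $\sigma$-algebra generated by the field and using the linearity of the deterministic operators $\nabla_v,\,S^\vae_{\pm\sigma}$, the expectation factorizes into $\mathbb E[E^\vae(t)\cdot\nabla_v S^\vae_\sigma E^\vae(t-\sigma)\cdot\nabla_v S^\vae_{-\sigma}]$ acting on $S^\vae_{2\vae^2\uptau}\mathbb E[f^\vae(t-2\vae^2\uptau)]$, which up to the outer free streaming is the announced main term.

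The hard part will be precisely this factorization, and in particular the disappearance of the extra free streaming $S^\vae_{2\vae^2\uptau}$: the honest output of the splitting carries the averaged operator applied to $S^\vae_{2\vae^2\uptau}\mathbb E[f^\vae(t-2\vae^2\uptau)]$, not to $\mathbb E[f^\vae(t-2\vae^2\uptau)]$ itself. To reach the stated form I would argue that, after the $x$-average $\fint dx$, the spatial homogeneity of the two-point field statistics (the $\delta(k+k')$ of (\ref{estimetstoc})) forces the averaged operator to probe only the $x$-mean of its argument, and the free transport $S^\vae_{2\vae^2\uptau}$, a mere shift $x\mapsto x-2\uptau v$ on $\torus$, leaves that $x$-mean invariant. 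I therefore expect the clean identity to use homogeneity in addition to H1 and H2; absent it, the proposition should be read with the residual operator $S^\vae_{2\vae^2\uptau}$ kept on the mean density. The remaining care is routine, namely justifying the interchange of $\mathbb E$, $\fint dx$, $\nabla_v$ and the time integrals, which is licit under the standing $\vae>0$ smoothness of $E^\vae$ and $f^\vae$.
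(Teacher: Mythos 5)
Your proposal follows essentially the same route as the paper: insert the second-order Duhamel formula (\ref{dduhamel1}), kill the free-streaming term via Lemma~\ref{lemmeproba} together with {\bf H1}, factorize the single-collision term via Lemma~\ref{lemmeproba}, and identify the double-collision remainder with $\mu_t^\vae$. You are moreover right to flag the residual $S^\vae_{2\vae^2\uptau}$ acting on $\mathbb{E}[f^\vae(t-2\vae^2\uptau)]$: the paper's own proof in fact stops at $\mathbb{E}[S^\vae_{2\vae^2\uptau}f^\vae(t-2\vae^2\uptau)]$ without reconciling it with the stated formula, so your observation that it disappears only after the $x$-average (or must otherwise be kept) is a point the paper leaves implicit.
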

\begin{proof}
Applying operator $E^\vae(t)\cdot\nabla_v$ to (\ref{dduhamel1}),
and then applying successively the average in space and the
expectation value, we obtain
\begin{multline}
  \label{dduhamel2}
  -\nabla_v\cdot \mathbb{E}\left[\fint
    dx\,\frac{E^\varepsilon(t) f^\varepsilon(t)}{\varepsilon} \right]=
  \frac1{\vae}  \fint
  dx\,\mathbb{E}\big[E^\vae(t)\cdot\nabla_v
    S_{\varepsilon^2\uptau}^\varepsilon
    f^\varepsilon(t-\varepsilon^2\uptau)\big]\\ + \frac1{\vae^2}
  \int_0^{\varepsilon^2\uptau} d\sigma \fint  dx\,
  \mathbb{E}\left[ E^\vae(t)\cdot\nabla_v S_{\sigma}^\varepsilon
    E^\vae(t-\sigma)\cdot\nabla_v S_{-\sigma}^\varepsilon
    S_{2\varepsilon^2\uptau}^\varepsilon
    f^\varepsilon(t-2\varepsilon^2\uptau) \right] + \mathbb E[
    \mu_t^\varepsilon]\,,
\end{multline}
with
\begin{equation*}
  \label{def:eta}
\mu_t^\varepsilon= -\frac1{\vae^3}  \int_0^{\varepsilon^2\uptau}
  d\sigma \int_0^{2\varepsilon^2\uptau-\sigma}  ds \fint  dx\,
  E^\vae(t)\cdot\nabla_v S_{\sigma}^\varepsilon
    E^\vae(t-\sigma)\cdot\nabla_v S_{s}^\varepsilon
    E^\vae(t-\sigma-s)\cdot\nabla_v
    f^\varepsilon(t-\sigma-s)\,.
\end{equation*}
 Using Lemma~\ref{lemmeproba},  we obtain that $f^\varepsilon(t)$ is
 independent of  $E^\vae(s)\cdot\nabla_v$ as soon as $s\geq t+
 \varepsilon^2 \uptau$. Then, using  hypothesis ${\bf H1}$, we
 obtain 
\begin{equation*}
  \label{dduhamel3}
  \mathbb{E}\big[E^\vae(t)\cdot\nabla_v
    S_{\varepsilon^2\uptau}^\varepsilon
    f^\varepsilon(t-\varepsilon^2\uptau)\big] =
  \mathbb{E}\big[E^\vae(t)\cdot\nabla_v \big]
  S_{\varepsilon^2\uptau}^\varepsilon
  \mathbb{E}\big[f^\varepsilon(t-\varepsilon^2\uptau)\big]=0\,,
\end{equation*}
anf the first term of the right-hand side of (\ref{dduhamel2})
vanishes. Since Proposition~\ref{decoref} implies that
$E^\vae(t)\cdot\nabla_v$ and
$E^\vae(t-\sigma)\cdot\nabla_v$ are independent  of
$f^\varepsilon(t-2\varepsilon^2\uptau)$, for $0\leq \sigma
\leq\varepsilon^2\uptau$,  we obtain from (\ref{dduhamel2}),
\begin{multline*}
  \frac{1}{\vae^2}\int_0^{\varepsilon^2\uptau} d\sigma \fint
  dx\, \mathbb{E}\left[ E^\vae(t)\cdot\nabla_v
    S_{\sigma}^\varepsilon E^\vae(t-\sigma)\cdot\nabla_v
    S_{-\sigma}^\varepsilon S_{2\varepsilon^2\uptau}^\varepsilon
    f^\varepsilon(t-2\varepsilon^2\uptau) \right]\\ =
  \frac{1}{\vae^2} \int_0^{\varepsilon^2\uptau}d\sigma \fint
  dx\,\mathbb{E}\left[E^\vae(t)\cdot\nabla_v
    S_{\sigma}^\varepsilon E^\vae(t-\sigma)\cdot\nabla_v
    S_{-\sigma}^\varepsilon\right]
  \mathbb{E}\left[S_{2\varepsilon^2\uptau}^\varepsilon
    f^\varepsilon(t-2\varepsilon^2\uptau) \right]\,.
\end{multline*}
\end{proof}

\subsection{Weak limits}

The asymptotic behavior of the error term $\mu_t^\vae$ has $\vae\rightarrow 0$ is
given by

\begin{proposition}\label{reste0}
For any $\phi \in \mathcal D(\R^+_t\times \R^d_v) $ one obtains
\begin{equation*}
 | \langle  \mu_t^\vae \,,\,\phi \rangle |\le \vae \uptau^4 C(\phi)
 \mathbb E\left[\|E\|_{L^\infty(\R_t^+; W^{2,\infty}(\torus))}^3\right]\,.
\end{equation*}
\end{proposition}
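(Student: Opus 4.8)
The plan is to pair $\mu_t^\vae$ with the test function $\phi(t,v)$ and estimate $\langle \mu_t^\vae,\phi\rangle=\int_{\R^+_t}dt\int_{\R^d_v}dv\,\phi(t,v)\,\mu_t^\vae$. The whole difficulty is that the three operators $E^\vae(\cdot)\cdot\nabla_v$ and the two flows $S^\vae_\sigma,S^\vae_s$ act on $f^\vae$, whose derivatives are \emph{not} controlled uniformly in $\vae$; only the a priori bounds $\|f^\vae(t)\|_{L^p}=\|f_0\|_{L^p}$ are available. My remedy is to transfer every derivative from $f^\vae$ onto $\phi$ and the fields $E^\vae$ by duality. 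For the bracket $\langle g,h\rangle:=\fint dx\int_{\R^d_v}dv\,g\,h$, I would first record, using that $E^\vae$ is $v$-independent (so $\nabla_v\cdot E^\vae=0$) and the $x$-periodicity, that $E^\vae\cdot\nabla_v$ is skew-adjoint, $(E^\vae\cdot\nabla_v)^*=-E^\vae\cdot\nabla_v$, and that the flow is ``unitary'', $(S^\vae_a)^*=S^\vae_{-a}$, the latter by the change of variable $x\mapsto x+va/\vae^2$. These are exactly the properties stated for the groups $S^\vae$ when they are introduced.

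Transferring the three skew-adjoint operators and the two flows across the bracket frees the undifferentiated $f^\vae$, and I obtain, for each fixed $t$,
\begin{equation*}
\int_{\R^d_v}\phi(t,v)\,\mu_t^\vae\,dv
=-\frac1{\vae^3}\int_0^{\vae^2\uptau}d\sigma\int_0^{2\vae^2\uptau-\sigma}ds\,
\big\langle \Psi^{t}_{\sigma,s}\,,\,f^\vae(t-\sigma-s)\big\rangle,
\end{equation*}
where the $\fint dx$ carried by $\mu_t^\vae$ has been absorbed into the bracket and
\begin{equation*}
\Psi^{t}_{\sigma,s}=-\,E^\vae(t-\sigma-s)\cdot\nabla_v\Big(S^\vae_{-s}\big(E^\vae(t-\sigma)\cdot\nabla_v\big(S^\vae_{-\sigma}\big(E^\vae(t)\cdot\nabla_v\phi\big)\big)\big)\Big),
\end{equation*}
all integrations by parts (in $v$ over $\R^d_v$, in $x$ over $\torus$) being legitimate because, for fixed $\vae>0$, $f^\vae$ is as smooth and decaying as needed while $\nabla_v\phi$ has compact support, so no boundary term survives. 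I would then expand $\Psi^{t}_{\sigma,s}$ by the Leibniz and chain rules. The only mechanism producing large factors is a $\nabla_v$ falling on a flow: from $S^\vae_a g(x,v)=g(x-va/\vae^2,v)$ one gets $\nabla_v(S^\vae_a g)=-\tfrac{a}{\vae^2}S^\vae_a(\nabla_x g)+S^\vae_a(\nabla_v g)$, so each such hit costs a factor $a/\vae^2$ and converts a $v$-derivative into an $x$-derivative.

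Consequently $\Psi^{t}_{\sigma,s}$ is a finite sum of terms, each a product of three factors $E^\vae$ evaluated at $t,\,t-\sigma,\,t-\sigma-s$ and carrying at most two $x$-derivatives each (hence controlled by $\|E^\vae\|_{W^{2,\infty}}$), of derivatives of $\phi$ up to order three, of the flows (which preserve $L^1(\torus\times\R^d_v)$), and of at most the two scalar factors $\sigma/\vae^2\le\uptau$ and $s/\vae^2\le 2\uptau$. Bounding $|\langle \Psi^{t}_{\sigma,s},f^\vae\rangle|\le\|\Psi^{t}_{\sigma,s}\|_{L^1(\torus\times\R^d_v)}\|f^\vae\|_{L^\infty}$ and using $\|f^\vae(t)\|_{L^\infty}\le\|f_0\|_{L^\infty}$ gives a pathwise (in $\omega$) bound whose dominant term carries the full factor $\sigma s/\vae^4$. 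Integrating this worst term,
\begin{equation*}
\frac1{\vae^3}\int_0^{\vae^2\uptau}\!d\sigma\int_0^{2\vae^2\uptau}\!ds\,\frac{\sigma s}{\vae^4}
=\frac1{\vae^7}\,\frac{(\vae^2\uptau)^2}{2}\,\frac{(2\vae^2\uptau)^2}{2}=\vae\,\uptau^4,
\end{equation*}
while the subdominant terms (one or no flow-hit) produce the lower powers $\vae\uptau^3,\vae\uptau^2$ and are absorbed into $\vae\uptau^4$. The $t$-integral over the compact support of $\phi$, the constant $\|f_0\|_{L^\infty}$ and the $\phi$-norms are collected into $C(\phi)$, the three fields into $\|E\|^3_{L^\infty(\R^+_t;W^{2,\infty}(\torus))}$, and taking the expectation yields the claimed estimate.

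The genuine obstacle is conceptual rather than computational: since no $\vae$-uniform regularity of $f^\vae$ is at hand, the estimate can only succeed if every derivative is removed from $f^\vae$ beforehand, which forces the systematic use of the skew-adjointness and unitarity above and the verification that the resulting $\Psi^{t}_{\sigma,s}$ never differentiates $f^\vae$. One must then check the delicate cancellation of powers of $\vae$, confirming that the alarming prefactor $\vae^{-3}$ together with the $\vae^{-4}$ coming from differentiating the two flows is more than compensated by the $\vae^{8}$ furnished by the two short time integrations, leaving the single favorable power $\vae^{+1}$ (and the announced $\uptau^4$).
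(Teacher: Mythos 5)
Your proposal is correct and follows essentially the same route as the paper: transfer all derivatives off $f^\vae$ using $(S^\vae_t)^*=S^\vae_{-t}$ and the skew-adjointness of $E^\vae\cdot\nabla_v$, observe that each $\nabla_v$ hitting a flow converts into an $x$-derivative times a factor bounded by $\uptau$ (the paper makes this transparent by first rescaling $(\sigma,s)\mapsto(\vae^2\sigma,\vae^2 s)$, which is why it lands directly on the prefactor $\vae$, but the power count is identical to yours), then use $\|f^\vae\|_{L^\infty}=\|f_0\|_{L^\infty}$, the compact support of $\phi$, the $W^{2,\infty}$ control of the three field factors, and finally take the expectation. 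No gap.
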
  
\begin{proof}
First changing $(\sigma,s)$ into  $(\vae^2\sigma,\vae^2 s)$, with any
$\phi \in \mathcal D(\R^+_t\times \R^d_v)$,    one obtains
\begin{multline*}
  \langle\mu_t^\vae \,,\,   \phi\rangle =\int_{\R^+_t\times
    \R^d_v}dtdv \, \phi(t,v)\\\frac1{\vae^3}  \int_0^{\varepsilon^2\uptau}
  d\sigma \int_0^{2\varepsilon^2\uptau-\sigma}  ds \fint  dx\,
  E^\vae(t)\cdot\nabla_v S_{\sigma}^\varepsilon
  E^\vae(t-\sigma)\cdot \nabla_v S_{s}^\varepsilon
  E^\vae(t-\sigma-s)\cdot\nabla_v f^\varepsilon(t-\sigma-s)
  \\ \hspace{-8cm} =  \vae \int_{\R^+_t\times \R^d_v}dt dv \, \phi(t,v)
   \int_0^{ \uptau}
  d\sigma \int_0^{2 \uptau-\sigma}  ds \fint  dx\, \\
  E^\vae(t)\cdot\nabla_v S_{\vae^2\sigma}^\varepsilon
  E^\vae(t-\vae^2\sigma)\cdot \nabla_v   S_{\vae^2s}^\varepsilon
  E^\vae(t-\vae^2(\sigma+s))\cdot \nabla_v
  f^\varepsilon(t-\vae^2(\sigma+s))\,.
\end{multline*}
Then, with several integrations by part and using the fact that
${{S^\vae}_t}^*=S^\vae_{-t}$ one obtains (see \cite{BB}) 
\begin{multline}\label{ouf0}
\langle  \mu_t^\vae \,,\, \phi \rangle = -\varepsilon
\int_{\R^+}dt \int_{\R^d} dv\int_{0}^{ \uptau} d\sigma \int_{0}^{2
  \uptau-\sigma} ds \fint dx\,\\
  f^\varepsilon(t-\varepsilon^2(\sigma+s)) E^\vae
  (t-\varepsilon^2(\sigma+s))\cdot \nabla_v(
  S_{-\varepsilon^2s}^\varepsilon E^\vae
  (t-\varepsilon^2\sigma)\cdot\nabla_v
  (S_{-\vae^2\sigma}^\vae E^\vae(t)\cdot\nabla_v\phi))\,.
\end{multline}
In the last line of (\ref{ouf0}) appears the term 
$$
E^\vae (t-\varepsilon^2(\sigma+s)) \cdot\nabla_v(
S_{-\varepsilon^2s}^\varepsilon E^\vae
(t-\varepsilon^2\sigma)\cdot\nabla_v(
S_{-\varepsilon^2\sigma}^\varepsilon E^\vae(t) \cdot \nabla_v
\phi))\,,
$$
which contains at most second order derivatives with
respect to $v$ of expressions of the form $E^\vae(s, x+\tilde \sigma
v)$. With $\uptau$ finite, $x\in \torus$  and the with the
introduction of a test function $\phi\in \mathcal{D}(\R^+_t \times
\R^d_v)$,  the support of the integrand is bounded in
$\R^+_t\times\torus\times\R^d_v\,.$ Then,  with a crude estimate (that
could be improved) one obtains
 \begin{equation}\label{reste}
 | \langle  \mu_t^\vae \,,\,\phi \rangle |\le \vae \tau^4 C(\phi)
\|E^\vae\|_{L^\infty(\R_t^+; W^{2,\infty}(\torus))}^3\,.
 \end{equation}
 Finally, taking the expectation of (\ref{reste}) one concludes
 the proof of Lemma~\ref{reste0}.
\end{proof}

The diffusion limit is given by

\begin{proposition}\label{dual}
 For any $\phi \in\mathcal D(\R^+_t\times \R^d_v) $, one obtains
\begin{equation}
\label{ouf2}
 \overline{\int_{\R^+_t \times \R^d_v} dt dv \,\phi \nabla_v\cdot
   \mathbb{E}\left[\fint  dx\, \frac{E^\varepsilon(t)
       f^\varepsilon(t)}{\varepsilon} \right]} = -  \int_{\R^+_t
   \times \R^d_v}  dtdv\, \overline{\fe}(t,v)  \nabla_v\cdot \big( \mathbb D(v) \nabla
 \phi(t,v)\big)\,.
 \end{equation}
\end{proposition}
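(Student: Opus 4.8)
The plan is to read the limit off directly from the expectation form (\ref{Ebiduhamel}) of the bi-Duhamel identity, to discard the cubic remainder by means of Proposition~\ref{reste0}, and then to pass to the limit in the surviving product with the help of the ergodicity recalled in Proposition~\ref{lem:EFF} (cf. the discussion following (\ref{eqn:RL1})) and of the convergence of the Reynolds tensor established in Proposition~\ref{bbochner}. Since the expectation, the space average $\fint dx$ and the $v$-integration all commute, the left-hand side of (\ref{ouf2}) coincides with the quantity tested in (\ref{Ebiduhamel}); hence, cluster points being understood, it equals the limit of
\begin{multline*}
-\mathbb E\Big[\int_{\R^d_v}dv\fint dx\int_0^{t/\vae^2}d\sigma\,f^\vae(t-\vae^2\sigma,x,v)\\\times\nabla_v\cdot\big(E^\vae(t-\vae^2\sigma,x)\otimes E^\vae(t,x+\sigma v)\,\nabla_v\phi\big)\Big]\,.
\end{multline*}
The decorrelation already carried out in Proposition~\ref{decorelessentiel} rewrites this expression, up to the remainder $\mathbb E[\mu_t^\vae]$, by factorizing the coupled expectation and replacing the correlated density $f^\vae(t-\vae^2\sigma)$ by the decorrelated one $f^\vae(t-2\vae^2\uptau)$. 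By Proposition~\ref{reste0} one has $|\langle\mathbb E[\mu_t^\vae],\phi\rangle|\le\vae\uptau^4 C(\phi)\,\mathbb E[\|E\|^3_{L^\infty(\R^+_t;W^{2,\infty}(\torus))}]\to 0$, so this remainder does not contribute.

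After the decorrelation the surviving kernel is $\int_0^{t/\vae^2}d\sigma\,\mathbb E[E^\vae(t,x+\sigma v)\otimes E^\vae(t-\vae^2\sigma,x)]$, whose time integral may be cut down to $[0,\uptau]$ by the finite decorrelation hypothesis {\bf H2} (see (\ref{H22})) and whose $x$-dependence disappears after $\fint dx$ by space homogeneity; this is precisely the Reynolds electric stress tensor $\mathbb D^\vae(t,v)$ of Proposition~\ref{bbochner}. The reorganization of the operator composition into this tensor form is the one already performed in the deterministic computation leading from (\ref{weakfubini}) to (\ref{biduhamel}) (change of variable $\sigma\mapsto\vae^2\sigma$, the explicit free-transport formula, and $2\pi$-periodicity), the transpose being immaterial by the symmetry of $\mathbb D$. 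Thus the left-hand side of (\ref{ouf2}) equals
\begin{equation*}
-\overline{\int_{\R^+_t\times\R^d_v}dt\,dv\,\Big(\fint dx\,\mathbb E[f^\vae(t-2\vae^2\uptau,x,v)]\Big)\,\nabla_v\cdot\big(\mathbb D^\vae(t,v)\,\nabla_v\phi(t,v)\big)}+o(1)\,.
\end{equation*}

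It remains to pass to the limit in this product. On the one hand, by the ergodicity of Proposition~\ref{lem:EFF} together with the uniform $L^\infty$ bound, the averaged density $\fint dx\,\mathbb E[f^\vae(t-2\vae^2\uptau,x,v)]$ converges weakly-$\star$ to $\overline{\fe}(t,v)$, the harmless time shift $2\vae^2\uptau\to 0$ being absorbed by the same bound. On the other hand, Proposition~\ref{bbochner} identifies the limit of $\mathbb D^\vae$ as the real nonnegative symmetric matrix $\mathbb D(v)$ of (\ref{LDV}); crucially, the Bochner--Paley--Wiener structure in its proof shows that the convergence is strong -- indeed, for $\vae$ small the truncated integral $\int_0^{t/\vae^2}$ already exhausts the support $[-\uptau,\uptau]$ of $A_k$ away from a vanishing neighbourhood of $t=0$, so that $\mathbb D^\vae$ coincides with $\mathbb D$ and $\nabla_v\cdot(\mathbb D^\vae\nabla_v\phi)$ converges strongly. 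Passing to the limit in the product of a weakly-$\star$ convergent factor against a strongly convergent one (the content of (\ref{point3}), here extended to the $L^\infty$ factor $\overline{\fe}$) then yields $-\int_{\R^+_t\times\R^d_v}\overline{\fe}(t,v)\,\nabla_v\cdot(\mathbb D(v)\nabla_v\phi)\,dv\,dt$, which is the right-hand side of (\ref{ouf2}).

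The main difficulty is precisely this last step: a priori one is passing to the limit in the product of two $\vae$-dependent factors, $\fint dx\,\mathbb E[f^\vae(t-2\vae^2\uptau)]$ and $\mathbb D^\vae$, each of which is only controlled in a weak sense, and such products do not in general commute with the limit. What saves the argument -- and what the stochastic framework is designed to provide -- is the strong (eventually exact) convergence $\mathbb D^\vae\to\mathbb D$ furnished by {\bf H2}--{\bf H3} through Proposition~\ref{bbochner}, which removes any oscillatory coupling between the two factors; the weak-$\star$ convergence of the density then suffices.
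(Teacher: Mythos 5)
Your proposal is correct and follows essentially the same route as the paper: decorrelation via Proposition~\ref{decorelessentiel}, elimination of the cubic remainder by Proposition~\ref{reste0}, and a weak-$\star$ times strong limit in which the strong convergence of the tensor factor is exactly what the space--time homogeneity {\bf H3} provides (the paper makes this explicit by showing that without the $\delta(k+k')$ factor the phase $e^{-i(\omega_k+\omega_{k'})t/\vae^2}$ would destroy the strong $L^1$ convergence of the test-side function). The only point you pass over quickly is the bookkeeping of the free-transport conjugations $S^\vae_{\pm\sigma}$ and $S^\vae_{2\vae^2\uptau}$, which the paper handles by the change of variables $(t,x)\mapsto(t-2\vae^2\uptau,\,x-2v\uptau)$ and which generate extra $(\sigma-2\uptau)\nabla_x$ contributions that disappear only after the $x$-average combined with the homogeneity assumption.
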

 
\begin{proof}
Knowing already from Proposition~\ref{reste0} that the reminder
$\mu^\vae_t$ goes to $0$ in $\mathcal D '(\R_t^+\times\R^d_v) $, one obtains
\begin{equation}
  \label{ouf3}
 \int_{\R^+_t \times \R^d_v} dt dv \,\phi \nabla_v\cdot
   \mathbb{E}\left[\fint  dx\, \frac{E^\varepsilon(t)
       f^\varepsilon(t)}{\varepsilon} \right] = I^\vae,
\end{equation}
with
\begin{equation*}
\begin{aligned}
I^\vae:=
- \int_{\R^+_t } dt \int_{\R^d_v} dv  \frac{1}{\vae^2}\int_0^{\vae^2\uptau}d\sigma \fint dx\, \phi(t,v)
\mathbb{E}&\left[E^\vae(t)\cdot\nabla_v
    S_{\sigma}^\varepsilon E^\vae(t-\sigma)\cdot\nabla_v
    S_{-\sigma}^\varepsilon\right]\\
    &
  \mathbb{E}\left[S_{2\varepsilon^2\uptau}^\varepsilon
    f^\varepsilon(t-2\varepsilon^2\uptau) \right]\,.
\end{aligned}\end{equation*}  
After expanding the integrand of $I^\vae$, using an integration by parts in $v$,
and changing $\sigma$ into $\vae^2 \sigma$, one obtains
\begin{multline*}
I^\vae:=\int_{\R^+_t } dt \int_{\R^d_v} dv \int_0^{\uptau}d\sigma \fint dx\, \nabla_v\phi^T\,
\mathbb{E}[E^\vae(t,x)\otimes
  E^\vae(t-\vae^2\sigma,x-v\sigma )]
\\ \mathbb{E}[
  (\sigma-2\uptau)(\nabla_xf)(t-2\vae^2\uptau,x-2v\uptau,v) + (\nabla_vf)(t-2\vae^2\uptau,x-2v\uptau,v)
]\,.  
\end{multline*}  
Using the change of variables $(t,\,x,\,v) \rightarrow (t'=t-2\vae^2\uptau,\,x'=x-2v\uptau,\,v'=v)$
and integration by parts in $(x,v)$ one obtains
\begin{equation*}
  I^\vae:=- \frac{1}{(2\pi)^d}\int_{\R_t } dt \int_{\R^d_v} dv \int_0^{\uptau}d\sigma\int_{\R_x^d} dx\,
  \mathbb{E}[f^\vae(t,x,v)] \Psi^\vae(t,\sigma, x, v)\,,
\end{equation*}  
with
\begin{equation}
\begin{aligned}
 & \Psi^\vae(t,\sigma, x, v)= 
  \mathbbm{1}_{[-2\vae^2\uptau,\,+\infty[}(t) \mathbbm{1}_{\{\torus -2v\uptau\}}(x)  
      (\nabla_v\cdot\, + \,(\sigma-2\uptau)\nabla_x \cdot\,) \\ 
     & \mathbb{E}[E^\vae(t-\vae^2(\sigma-2\uptau),x-v(\sigma-2\uptau))\otimes
  E^\vae(t+2\vae^2\uptau,x+2v\uptau )]\nabla_v\phi(t+2\vae^2\uptau,v)
  \,.
\end{aligned}
\end{equation}
The domain of integration of the integral $ I^\vae$ is a compact set $K$
of $\,\R_t\times\R^d_v\times[0,\uptau]_\sigma\times\torus$. We already know
that $\mathbb{E}[f^\vae(t,x,v)]\rightharpoonup \overline{f^\vae}(t,v)$ in $L^\infty(K)$
weak$-\star$. It remains to show the strong convergence in $L^1(K)$ of
$\Psi^\vae$ to a suitable cluster point $\overline{\Psi^\vae}$. For this, using (\ref{Os0}),
one obtains
\begin{multline*}
  \Psi^\vae(t,\sigma, x, v)=\mathbbm{1}_{[-2\vae^2\uptau,\,+\infty[}(t) \mathbbm{1}_{\{\torus -2v\uptau\}}(x) \
      (\nabla_v\cdot\, + \,(\sigma-2\uptau)\nabla_x \cdot\,) \\ 
      \sum_{k,k'\in \Z^d} -k\otimes k' \, e^{i(k+k')\cdot x}  e^{i2(k+k')\cdot v\uptau } e^{-i 2(\omega_k+\omega_{k'})\uptau }
      e^{-i(\omega_k+\omega_{k'})\frac{t}{\vae^2} } e^{-i(\omega_k-k\cdot v) \sigma}
     \\ \mathbb{E}[\underline{\Phi}^\vae(t-\vae^2(\sigma-2\uptau),k)
        \underline{\Phi}^\vae(t+2\vae^2\uptau,k')]\nabla_v\phi(t+2\vae^2\uptau,v)\,.
\end{multline*}
Without space homogeneity, we observe that
the term $\exp(-i(\omega_k+\omega_{k'}){t}/{\vae^2})$ does not converge
pointwise almost everywhere in time, which prevents strong convergence in $L^1(K)$
of the function $\Psi^\vae$. By constrast, using the spatio-temporal homogeneity property
(\ref{estimetstoc} ) one obtains
\begin{multline*}
  \Psi^\vae(t,\sigma, x, v)=\mathbbm{1}_{[-2\vae^2\uptau,\,+\infty[}(t) \mathbbm{1}_{\{\torus -2v\uptau\}}(x) \
      (\nabla_v\cdot\, + \,(\sigma-2\uptau)\nabla_x \cdot\,) \\ 
      \sum_{k\in \Z^d} A_k(\sigma) e^{-i(\omega_k-k\cdot v) \sigma} k\otimes k \nabla_v\phi(t+2\vae^2\uptau,v)\,.
\end{multline*}
Using regularity properties (\ref{estimetstoc2} )and Lebesgue dominated convergence theorem,
one obtains that  $\Psi^\vae$ converges in $L^1(K)$ strong towards
the cluster point $\overline{\Psi^\vae}$, which is defined by
\begin{equation*}
  \overline{\Psi^\vae(t,\sigma, x, v)}=\mathbbm{1}_{\R^+}(t) \mathbbm{1}_{\{\torus -2v\uptau\}}(x) \
      (\nabla_v\cdot\, + \,(\sigma-2\uptau)\nabla_x \cdot\,)
      \sum_{k\in \Z^d} A_k(\sigma) e^{-i(\omega_k-k\cdot v) \sigma} k\otimes k \nabla_v\phi(t,v)\,.
\end{equation*}
Using properties  (\ref{estimetstoc2}) for $A_k$, and passing to the limit $\vae \rightarrow 0$ in $I^\vae$,
one obtains
\begin{equation*}
  \overline{I^\vae}=-\int_{\R^+_t } dt \int_{\R^d_v} dv\,
  \overline{f^\vae}(t,v) \nabla_v \cdot \Big(\frac 12
  \sum_{k\in \Z^d}  k\otimes k\, \int_{\R} d\sigma\, A_k(\sigma) e^{-i(\omega_k-k\cdot v) \sigma} \nabla_v\phi(t,v)
  \Big)\,.
\end{equation*}
Using this last equation, definitions (\ref{hatA})  and (\ref{LDV}), and passing
to the limit $\vae \rightarrow 0$ in (\ref{ouf3}), ones obtains  (\ref{ouf2}),
which ends the proof of the Proposition~\ref{dual}.
 \end{proof}
 
\subsection{The basic stochastic theorem}
From the above derivation one  deduces 
\begin{theorem}
  Let $\{E^\vae(t,x; \omega)\}_{\omega \in \Omega}=\{-\nabla \Phi(t,x;\omega)\}_{\omega \in \Omega}$
  be a family of stochastic (with
  respect to the random variable $\omega\in \Omega$)
  gradient vector fields. Assume that such vector fields
  satisfy the $\vae$-independent local in time regularity hypothesis
\begin{equation*}
\forall \vae>0 \ \ \hbox{ and } \ \  \forall T>0, \ \  \sup_{0<t<T}\|
E^\vae (t)\|_{W^{2,\infty}(\torus)} \le  C(T)\,,
\end{equation*}
and the  following detailed ergodicity hypotheses. With
\begin{equation*}
\forall k\in \mathbb Z^d\backslash\{ 0\}, \quad
E^\vae(t,k) =\fint E^\vae(t,x) e^{-ik\cdot x} dx =
\underline{E}^\vae(t,k)e^{-i\frac{\omega_k t}{\vae^2}} =  -ik  \underline{\Phi}
  ^\vae(t,k) e^{-i\frac{\omega_k t}{\vae^2}}\,, 
\end{equation*}
there exist a constant $\uptau\in (0,\,+\infty)$  and functions $\sigma\mapsto A_k(\sigma)$, 
 $k\in \mathbb  Z^d\backslash \{0\}$, 
such that   
\begin{equation*} \label{repeat}
  \begin{aligned}
    &\mathbb E[  \underline{\Phi}^\vae(t,k) \underline{
        \Phi}^\vae(s,k')] =  A_k\Big( \frac{t-s}{\vae^2}\Big) \delta(k+k')\,,
    \\ &\sigma \mapsto A_k( \sigma) \mbox{ is even, }  \quad
   |\sigma| >\uptau\Longrightarrow A_k(\sigma)=0\,, \quad \hbox{and } \quad 
   \sum_{k\in\mathbb Z^d} \int_\R |k|^3 |A_k(\sigma)|d\sigma
   <C_1\,, 
 \end{aligned}
\end{equation*}
with  $C_1 $ being independent of $\vae$. 
 
Then, 
\begin{enumerate}
\item For all $k\in \mathbb  Z^d\backslash \{0\}$ the Fourier transform of the function
  $\sigma\mapsto A_k(\sigma)$ is non negative and the bounded diffusion matrix
\begin{equation}\label{diffusionmatrix}
\mathbb D(v) =  \frac 12 \sum_{k\in \mathbb Z^d\backslash\{ 0\}} k\otimes
k  \ \hat{A}_k(\omega_k-k\cdot v)  
\end{equation}
is symmetric non negative and analytic in the variable $v$.
\item Define by $V\subset L^2(\R^d_v)$ the closure of the space of
  functions $\phi\in\mathcal D(\R^d_v)$ for the norm
\begin{equation*}
\|\phi\|_V^2= \|\phi\|^2_{L^2(\R^d_v)} +\int_{\R^d_v}
\big(\mathbb D(v)\nabla_v\phi \,,\, \nabla_v \phi\big)\,.
\end{equation*}
Then for any $f_0(v)\in L^2(\R^d_v)$ there exists a unique
solution of the following problem: find
\begin{equation}
 f (t,v)  \in \mathscr{C}\big(\R^+_t;\, L^2(\R^d_v)\big)\cap L^2 \big(\R^+_t;\, V\big) \quad
\hbox{ with }\quad   f (0,v)= f_0(v)\,, \label{reg}
\end{equation}
such that $f$ is the solution  (in the sense of $\mathcal D'(\R^+_t\times \R^d_v)$)  of the diffusion equation 
\begin{equation}
\del_t f -\nabla_v \cdot (\mathbb D(v) \nabla_v f )=0\,. \label{dist}
\end{equation}
\item Since $v\mapsto \mathbb D(v)$ is regular (analytic) the time
  derivative of the solution of (\ref{dist}) is well defined (say in
  $L^\infty(\R^+_t ; \, H^{-2}(\R^d_v))$), hence the initial condition
  $f(0,v)=f_0(v)$ is well defined and with such initial data
  this solution coincides with the unique solution of the problem
   (\ref{reg})-(\ref{dist})
\item Any cluster point $\overline{\fe}$, in the
  $L_{\rm loc}^\infty(\R^+_t \times \R^d_v)$ weak$-\star$ topology, of the family
  $\mathbb E[\fe]$ with $\fe$ solution of the stochastic Liouville equation  
\begin{equation*}
\vae^2\del_t\fe + v\cdot \nabla_x \fe+ \vae E^\vae \nabla_x
\fe=0\,, \quad \quad \fe(0,x,v)=f_0(x,v)\,,
\end{equation*}
is a function $\overline{\fe}(t,v) $ independent of $x$ and solution
of (\ref{dist}) with initial data given by
\begin{equation*}
\overline{\fe} (0,x)=\fint f_0(x,v) dx\,.
\end{equation*}
\item Eventually by a uniqueness argument it is not a subquence of
  $\mathbb E[\fe]$ but the whole family that converges to the solution
  of the diffusion equation.
\end{enumerate}

\end{theorem}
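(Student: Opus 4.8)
The plan is to assemble the preceding propositions, the only genuinely new ingredient being the well-posedness of the degenerate parabolic problem of points~2--3. Point~1 is nothing but Proposition~\ref{bbochner}, where the non-negativity of $\hat{A}_k$ and the symmetry, non-negativity and analyticity of the matrix $\mathbb D(v)$ of (\ref{diffusionmatrix}) (equivalently (\ref{LDV})) were already obtained. I would therefore treat in turn the abstract parabolic theory, the identification of the asymptotic limit, and the upgrade from subsequential to full convergence.

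For points~2 and~3 I would set up the Gelfand triple $V\hookrightarrow H\hookrightarrow V'$, with $H=L^2(\R^d_v)$ and $V$ the Hilbert space of the statement, and introduce the bilinear form
\[
 a(u,w)=\int_{\R^d_v}\big(\mathbb D(v)\nabla_v u\,,\,\nabla_v w\big)\,dv\,.
\]
By Proposition~\ref{bbochner} the matrix $\mathbb D(v)$ is symmetric, non-negative and bounded, so $a$ is symmetric, non-negative and continuous on $V\times V$; $V$ is by construction densely and continuously embedded in $H$; and one has the G\aa rding identity $a(u,u)+\|u\|_H^2=\|u\|_V^2$. The problem (\ref{reg})--(\ref{dist}) is then exactly of the type handled by the variational (Galerkin) method of J.-L. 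Lions: one builds approximate solutions in finite-dimensional subspaces of $V$, derives the energy identity obtained by testing (\ref{dist}) against $f$ itself,
\[
 \tfrac12\frac{d}{dt}\|f(t)\|_H^2+a\big(f(t),f(t)\big)=0\,,
\]
which upon integration on $[0,T]$ yields the a priori bounds $f\in \mathscr{C}(\R^+_t;H)$ and $f\in L^2([0,T];V)$ underlying (\ref{reg}), and passes to the limit; uniqueness follows from the same identity applied to the difference of two solutions. Equivalently one associates to the closed form $a$ its self-adjoint non-negative generator $A$ and sets $f(t)=e^{-tA}f_0$. Point~3 is then a regularity remark: the analyticity, hence boundedness together with its derivatives, of $v\mapsto\mathbb D(v)$ makes $\nabla_v\cdot(\mathbb D\nabla_v\,\cdot)$ a bounded map $L^2(\R^d_v)\to H^{-2}(\R^d_v)$, so any distributional solution with $f\in L^\infty(\R^+_t;L^2)$ automatically satisfies $\partial_t f\in L^\infty(\R^+_t;H^{-2})$; this gives continuity in time into $H^{-2}$, a well-defined trace at $t=0$, and, by the uniqueness just established, the identification of that distributional solution with the variational one.

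For point~4 I would combine ergodicity with the diffusion limit. By Proposition~\ref{lem:EFF} and Proposition~\ref{preremarques} every cluster point, in the $L^\infty_{\rm loc}(\R^+_t\times\R^d_v)$ weak-$\star$ topology, of the family $\mathbb E[\fe]$ is independent of $x$, hence of the form $\overline{\fe}(t,v)$, with initial trace converging to $\fint f_0(x,v)\,dx$. Taking the expectation and the $x$-average of the Fick relation (\ref{Fick0}) produces the weak form
\[
 \partial_t\,\mathbb E\Big[\fint \fe\,dx\Big]+\nabla_v\cdot\,\mathbb E\Big[\fint \frac{E^\vae\fe}{\vae}\,dx\Big]=0\,,
\]
and the only remaining task is to pass to the limit in the flux term. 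This is precisely the content of Proposition~\ref{dual}: its identity (\ref{ouf2}) expresses the limit of the flux divergence, tested against any $\phi\in\mathcal D(\R^+_t\times\R^d_v)$, as $-\int \overline{\fe}\,\nabla_v\cdot(\mathbb D(v)\nabla_v\phi)$, the error term coming from the second Duhamel iteration having already been discarded through Proposition~\ref{reste0}. Combining these, $\overline{\fe}$ solves (\ref{dist}) in $\mathcal D'(\R^+_t\times\R^d_v)$ with the announced initial data.

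Finally, for point~5 I would run the standard compactness--uniqueness argument: by point~3 the limit $\overline{\fe}$ of point~4 carries enough regularity to lie in the uniqueness class of the well-posed problem of point~2, so every weak-$\star$ cluster point of $\{\mathbb E[\fe]\}$ equals the single solution of (\ref{reg})--(\ref{dist}); since all subsequential limits coincide, the whole family converges. The main obstacle, and the reason classical parabolic theory cannot be quoted verbatim, is the degeneracy of $\mathbb D(v)$: by (\ref{LDV}) it is supported on the resonance hyperplanes $\omega_k=k\cdot v$ and may vanish along whole directions, so no uniform ellipticity is available and one must work in the intrinsic energy space $V$ attached to $a$. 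The truly delicate point is then to guarantee that the merely weak-$\star$ limit obtained in point~4 actually belongs to that uniqueness class, which is exactly where the analyticity of $\mathbb D$ used in point~3 enters.
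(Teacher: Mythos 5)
Your proposal is correct and follows essentially the same route as the paper: point~1 is read off Proposition~\ref{bbochner}, points~2--3 come from the Lions variational theory (the paper obtains the identification of the distributional solution with the variational one by mollifying $f$ in $v$, $f_\epsilon=\varrho_\epsilon\ast_v f$, rather than by your operator-boundedness remark, but the two are interchangeable), point~4 is relation (\ref{ouf2}) of Proposition~\ref{dual} combined with ergodicity, and point~5 is the compactness--uniqueness argument. The one substantive step you assert rather than justify is that the weak-$\star$ cluster point actually attains the initial datum $\fint f_0\,dx$ at $t=0$, which is what places it in the uniqueness class before point~5 can be invoked; the paper secures this by showing, from the conservation law and the flux estimates (\ref{reste}) and (\ref{ouf3}), that $\del_t\,\mathbb E\big[\fint \fe\,dx\big]$ is bounded in $L^\infty(\R^+_t;H^{-2}(\R^d_v))$ uniformly in $\vae$, so the family is equicontinuous in time and the initial condition passes to the limit. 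Your closing remark correctly locates the delicacy in the degeneracy of $\mathbb D$ and the uniqueness class, but the mechanism that resolves it is this uniform-in-$\vae$ time equicontinuity rather than the analyticity of $\mathbb D$ alone.
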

\begin{proof}
The point 1 follows directly from the points 1 and 2 of the
Proposition~\ref{bbochner}. The point 2 is a classical result of
variational theory (see \cite{Lions}). The purpose of the point 3
is to prove the regularity (\ref{reg}) for solutions in the sense of
distribution. This is easily done by considering standard regularizations of $f$
in velocity such that ${f}_\epsilon =\varrho_\epsilon \mathop{\ast}_{v} f\, $.
For the point 4, the limit equation results from relation (\ref{ouf2})
of Proposition~\ref{dual}. Then, to prove the time continuity in
$H^{-2}(\R^d_v)$ one considers the  equation
\begin{equation*}
\del_t \mathbb E \left [\fint dx\, \fe (t)\right] +\nabla_v\cdot
\mathbb{E}\left[\fint  dx\, \frac{E^\varepsilon(t)
    f^\varepsilon(t)}{\varepsilon} \right]=0\,,
\end{equation*}
and from estimate (\ref{reste}) and equation (\ref{ouf3}) one
deduces that 
$$ \del_t \mathbb E \left[\fint dx\,  \fe (t)\right]
$$ is uniformly bounded in $L^\infty(\R^+_t; H^{-2}(\R^d_v))$ and
this gives a uniform estimate on the time continuity, which is enough
to complete the proof of the point 5.
\end{proof}
 
\section{Returning to the Vlasov--Poisson equations}
\subsection{Classical stability results for nonlinear and linearized Vlasov--Poisson equations}
\label{sec:CSRLVP}
As above, solutions of the Vlasov--Poisson equations  are considered on
$\R^+_t\times\torus\times\R^d_v$ and, before any rescaling, involve a
probability density $f$, which is solution of the  Liouville equation,
\begin{equation*}
\del_t f +v\cdot \nabla_x f + E\cdot \nabla_v f =0\,.\label{VP1}
\end{equation*}
The electric field $E$ is given  by a selfconsistent potential, given in
terms of the density $\rho$ by
\begin{equation*}
-\Delta \Phi (t,x) =\rho(t,x) =\int_{\R^d_v} f(t,x,v)dv -1\,, \,\quad
E(t,x) = -\nabla \Phi(t,x)\,.\label{VP2}
\end{equation*}
Existence, uniqueness and persistence of regularity (with regular
initial data) are classical (see \cite{GLA96}) and in particular for
propagation of analyticity see \cite{BEN86}). However these
regularity properties may not be uniform with respect to time and 
scaling parameters.
Moreover,  as Fourier transform with respect to the velocity and Laplace transform
with respect to time are used. They are
denoted as follows,
\begin{equation}
\begin{aligned}
&\mathcal L h(\lambda ,v)  =\int_0^{\infty} e^{-\lambda s}
  h(s,v)ds\,,\\ & h(k,v)=\fint h(x,v)e^{-ik\cdot x} dx\,,
  \\ &\mathcal F_v G(\xi)=\int_{\R^d} G(v) e^{-iv\cdot \xi}
  dv\,.
\end{aligned}
\label{def:LFT}
\end{equation}
 Properties which are independent of time   are recalled below. With
 $f(0, x,v)\in L^\infty(\torus\times\R^d_v)$ being a positive density
 of mass  $1\,$ one has
\begin{equation*}
\begin{aligned}
 &\forall t\in \R^+\,,  \quad  f(t,x,v) \ge 0\,,  \quad 1\le p <
  \infty\,, \quad \int_{\torus \times\R^d_v} |f(t,x,v)|^p dvdx =
  \int_{\torus \times\R^d_v} |f(t,x,v)|^p dv dx,\\ &\forall t\in \R^+\,,
  \quad \frac{d}{dt}   \bigg( \fint \int_{\R^d_v} \frac{|v|^2}{2}
  f(t,x,v)dvdx + \frac 1 2  \fint  |E(t,x)|^2dx \bigg)=0\,.
\end{aligned}
\end{equation*}
With the above estimates one deduces also the following time uniform
regularity:
\begin{equation*}
\hbox{In any dimension } d\,,   \, \quad     \label{eqn:II2} \| E\|_{L^\infty(\R_t^+;W^{1,1+2/d}(\mathbb{T}^d))} \leq c_0 <\infty, 
\end{equation*}
\subsection{Spectral properties of linearized Vlasov--Poisson equations}\label{spectral}
Below, to use in short time quasilinear approximation some very
classical facts are recorded.
Any density $v\mapsto G(v) \ge 0 $ with $\int_{\R^d_v}G(v)dv =1 $ is a
stationary solution of the Vlasov--Poisson equations (with $E(t,x)\equiv0$).
Hence, for $f(t,x,v)=G(v)+\vae h(t,x,v)\,,$ the $\vae$ first-order
term is solution of the following linearized equation
  \begin{equation}
    \begin{aligned}
      &\del_t h +v\cdot\nabla_x h + E[h]\cdot\nabla_v G =0\,, \\ & \fint
      dx\int_{\R^d_v} h(t,x,v) dv=0\,, \quad
      E[h](t,x)=\nabla\Delta^{-1}\int_{\R^d_v} h(t,x,v)dv\,.
    \end{aligned}
    \label{linear20}
  \end{equation}
  By standard perturbation (See \cite{KA66}) the operator
  $\mathcal T_G: h\rightarrow \mathcal T_G h =-(v\cdot \nabla_x h + E[h]\cdot
\nabla_v G)$ is the generator of a strongly continuous  group $e^{t
  \mathcal T_G}$ (particularly in $L^2(\torus\times \R^d_v)$).  Next
one observes that in this space, the map  $h\mapsto E[h]\cdot\nabla_v G\,$ is
a compact operator. Hence, with the Duhamel formula,
\begin{equation*}
e^{t \mathcal T_G} h_0=  S_t h(0)   - \int_0^t d\sigma\, S_{t-\sigma} 
E[h](\sigma)\cdot\nabla_v G(v) \,,
\end{equation*}
one deduces that
for any $\alpha>1$, the spectra of $e^{t \mathcal{T}_G}$, contained in
the region $\{\mu\in \C \, | \, |\mu|\ge \alpha>1\}$,   is a finite sum of eigenvalues
with finite multiplicity, which are the images of the poles of the
resolvent of the generator $-\mathcal T_G$,  i.e. complex numbers
$\lambda_m\,,\,\,  \Re \lambda_m>0\, $, such that the equation
\begin{equation*}
\lambda_m h + v\cdot \nabla_x h+  E[h] \cdot\nabla_v G=0\,,  \quad h
\in L^2(\torus\times \R^d_v)\,,
\end{equation*}
has a non trivial solution. Using Fourier series on $\torus$,
this  means that there exists at least one $h(k_m,v)  \in L^2(\R^d_v)$
(may be more than one if the multiplicity of $\lambda_m$ is greater
than one) such that, taking in account the relation between the
Fourier components of the electric field and of the density, namely
\begin{equation*}
E[h](k_m)=  -\frac{ik_m}{|k_m|^2} \rho[h](k_m)\,,
\end{equation*}
one has the "dispersion  equation",
\begin{equation}
 1- \int_{\R^d_v}\frac {i k_m}{|k_m|^2}\cdot\frac{\nabla_v
   G(v)}{\lambda_m + ik_m\cdot v} dv =0 \label{Penrose1}\,.
\end{equation}

Since $G(v)$ is real, one observes that if $(\lambda_m  ,k_m)$ is a
solution, then the same  same is true for   $((\lambda_m)^\ast,- k_m)$
and that in this case one obtains for the Fourier component of
$h\,,$
\begin{equation*}
h_{ \lambda_m} (k_m, v)=-\frac 1{(\lambda_m + ik_m\cdot v)} E_{\lambda_m}(k_m)\cdot \nabla_v G(v)\,.
\end{equation*}
As a consequence (assuming for sake of simplicity   that
$\lambda_m=\gamma_m + i\omega_m$ is a simple root of the analytic
equation (\ref{Penrose1}))  one observes that  $h_m(t,x,v)$,  
the solution of the equation (\ref{linear20}),
and the electric field
\[
E_m(t,x)=e^{i k_m x +\lambda_m t}E_{\lambda_m}(k_m)
\]
are binded by the
relation, for any time $t$,
\begin{equation*}
h_{m}(t, x,v) =-\frac {e^{i k_m x +\lambda_m t}}  {(\lambda_m  + i k_m\cdot v) } E_{\lambda_m}(k_m)\cdot\nabla_v
 G
 \,.\label{binded}
 \end{equation*} 
On the other hand for any $\lambda_m\,,$ introduce the Kato projector 
(see \cite{KA66} page 178), on the eigenspace corresponding to the
eigenvalue $\lambda_m$, defined by
\begin{equation*}
 P_m h_0(x,v) =\frac 1{2i\pi} \int_{\Gamma_m}(\lambda I -\mathcal
 T_G)^{-1} h_0 d\lambda\,,
\end{equation*}
where $\Gamma_m$ is a ``small" oriented contour of the complex half
plane $\Re\lambda>0$ containing only  $\lambda_m $ in its interior.
Since   for any $\delta>0 $ there is a finite number of eigenvalues in
the region
\begin{equation*}
0<\delta\le \lambda\le \Lambda =\sup \Re \lambda_m\,,
\end{equation*}
one obtains, (assuming that these eigenvalues are simple) for any real
density $h(t,x,v) $ solution of the linearized equation
(\ref{linear20}), the following asymptotic expansion,
\begin{equation}\label{Dunford-Kato}
h(t,x,v)=  \sum_{0< \delta \le\Re\gamma_m\le \Lambda}   e^{(\gamma_m
  + i\omega_m)t}e^{ik_m\cdot  x}  P_m h_0+\mathcal{O}(e^{\delta  t})\,.
\end{equation}
Using an explicit Laplace transform (which is the forerunner of the
Dunford--Kato calculus used above) one tries to move the integration
contour to the left-half plane, i.e. on the interval $\lambda +i \sigma$
with $\Re\lambda<0 $ and $-\infty<\sigma<+\infty$. In doing so, one
generates a singularity at the point $\omega=- k\cdot v$ and by the
Plemelj formula a term of the form $i\pi f(-ik \cdot v)$ which makes
sense only under the hypothesis that the function $f$ is analytic 
(see \cite{KT73} chapter 8).  These are  standard methods in linear
scattering theory (see \cite{KA66, LP67}) which in the absence of more specific
information are  based on analyticity hypotheses and convergence in
ultra distributions (dual of analytic functions). These tools were introduced by Case
and Zweifel \cite{CAS59, CZ67}, extended by Sebasti\~ao e
Silva \cite{SEB67}, and later systematized by Degond \cite{DE86}.

\subsection{Remark about the Landau damping. Comparison with the
  behavior of the vector field in the rescaled equation}  \label{bbd}
In 1946, Landau \cite{LAN46} observed that in the absence of unstable modes
(no solutions $\lambda$ of the equation  (\ref{Penrose1}) with $\Re \lambda >0)$
the electric field goes exponentially fast to zero as $t\rightarrow \infty\,.$ 
Following the recent version of Grenier, Nguyen and Rodniansky \cite{
  GNR20}, we assume that the profile  $v\mapsto G(v)$ and the initial
data $(x,v) \mapsto h_0(x,v)$ are analytic functions.
Using notation (\ref{def:LFT}), Laplace transform with  respect to time, Fourier
transform with respect to $x$ and  Fourier transform with respect to
$v$ are used.  They are denoted as in Section~\ref{sec:CSRLVP}.
We first focus  on the behavior of the density $\rho[h]$, of the solution $h$ of
the linearized equation (\ref{linear20}), given by
$$ \rho[h](t,x)= \int_{\R^d_v} h(t,x,v) dv\,.
$$

Using Fourier--Laplace transformations  we obtain, for
$\Re\lambda>0\,,$  the relation,
\begin{equation*}
\begin{aligned}
 (1 + K_G(\lambda,k)) {\mathcal L} \rho[h](\lambda,k)= \int_{\R^d_v}
  \frac{   h_0(k,v)}{\lambda +ik\cdot v}dv  \quad \hbox{with} \quad
  K_G(\lambda,k)=-\int_{\R^d_v}\frac {i k }{|k |^2}\cdot\frac{\nabla_v
    G(v)}{\lambda  + ik \cdot v}  dv\,.
\end{aligned}
\end{equation*}
Then, following \cite{MV10},  one observes  that
\begin{equation*}
K_G(\lambda,k)=\int_0^\infty e^{-\lambda s}s (\mathcal F_{v   }
G)(ks)ds\quad \hbox{and} \quad\int_{\R^d_v} \frac{\hat
  h_0(k,v)}{\lambda +ik\cdot v}dv=\int_0^\infty  e^{-\lambda
  s}(\mathcal F_{v } h_0)(k,ks)ds\,.
\end{equation*}
Therefore, as in \cite{MV10}, one proposes  a stronger criterion for  the absence of
solution $\lambda_m$ with $\Re\lambda_m>0$ for the equation
(\ref{Penrose1}), which is
\begin{equation*}
\exists \kappa_0 >0, \mbox{ such that } \quad \inf_{k\in \mathbb   Z^d ,\Re \lambda >0}\left|1+   \int_0^\infty e^{-\lambda
  s}s \mathcal F_v  G(ks)ds\right| \ge \kappa_0>0 \,.\label{superpenrose}
\end{equation*}
 Then, we  obtain for $\Re \lambda>0$ the solution
\begin{equation*}
\mathcal L \rho[h](\lambda ,k) = \frac{1}{(1+ K_G(\lambda,k))}
\int_0^\infty e^{-s\lambda} (\mathcal F_v)h_0(k,ks)ds\,. \label{laplacero}
\end{equation*} 
With the hypothesis of analyticity, by the Paley--Wiener Theorem,
there exist   $C$ and $\theta_0$ such that one has:
\begin{equation*}
|\mathcal F_v  G(ks)| \le C e^{-\theta_0|k|s}\,, \quad \hbox{ and } \quad 
|\mathcal F_v h_0(k,ks)| \le C e^{-\theta_0|k|}\,.
\end{equation*}
As a consequence, using (\ref{superpenrose}) for any  $k\in
\mathbb{Z^d}\backslash \{0\}\,,$ the functions 
\begin{equation*}
K_G(\lambda,k)=\int_0^\infty e^{-\lambda s}s \hat G(ks)ds \quad
 \hbox{and} \quad S(\lambda,k)=\int_0^\infty e^{-s\lambda}  \mathcal
F_v h_0(k,ks) ds\,,
\end{equation*}
can be extended (for the density $\rho[h]$, the integration with
respect to $v$  leads to an extension behind the imaginary axis
without extra singularity) as  analytic functions in the region $\Re
\lambda >-\theta_0|k|\,.$ Eventually one obtains (see \cite{GNR20})
that there exists $\theta_1 >0$ such that
 \begin{equation}
 \hbox{for} \quad \Re\lambda\ge -\theta_1|k|,  \quad |\mathcal L
 \rho[h](\lambda,k)| \le \frac{C_1}{1+|k|^2 + |\Im \lambda
   |^2}\,.\label{toangrenier}
 \end{equation} 
This gives the exponential decay for $\rho[h] $ and $E[h]$ when $G$ is
analytic. This  is sufficient for the present discussion (extension to
an initial data  $f_0$  belonging only to a Gevrey  space with Gevrey index
$\gamma\in ( \frac13,1]$ uses the presence of the term $|k|^2$ in
(\ref{toangrenier})). Of course,  the nonlinearity requires  more
sophisticated analysis in particular in the interaction of modes,
which  is the classical problem of the echoes. Details can be found in
\cite{GNR20} where the following theorem is obtained. 

\begin{theorem}\label{GNRth}
Assume for  the initial data, 
\begin{equation}
f_0(x,v) = G(v) + \vae h_0(x,v),
\end{equation}
 that  $G(v) $ and $h_0(x,v)$ are analytic, while the basic profile
 $G(v)$ satisfies the stability estimate (\ref{superpenrose}). Then,
 for $\vae$ small enough, the corresponding solution exhibits the
 Landau damping effect  i.e. as $ t\rightarrow \infty$, the electric field
 $E(t,x)$ goes exponentially fast to $0\,$.
\end{theorem}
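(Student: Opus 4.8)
The plan is to follow the strategy of Grenier, Nguyen and Rodnianski \cite{GNR20}, treating the nonlinearity as a perturbation of the linear dynamics whose decay was established just above. Writing the solution as $f(t,x,v)=G(v)+\vae h(t,x,v)$ and using that $E[G]=0$ because $\rho[G]=0$, the perturbation $h$ solves
\begin{equation*}
\del_t h + v\cdot\nabla_x h + E[h]\cdot\nabla_v G = -\vae\, E[h]\cdot\nabla_v h\,, \qquad E[h]=\nabla\Delta^{-1}\!\int_{\R^d_v} h\, dv\,,
\end{equation*}
so the left-hand side is exactly the linearized operator of (\ref{linear20}) and the right-hand side is a quadratic term carrying the small factor $\vae$.

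First I would recast this as a closed Volterra equation for the density $\rho[h]=\int_{\R^d_v}h\,dv$. Taking the Fourier transform in $x$ and the Laplace transform in $t$, and integrating along the characteristics $x\mapsto x-vt$ exactly as in the proof of Proposition~\ref{lem:EFF}, the linear part reproduces the symbol $1+K_G(\lambda,k)$ computed above, while the bilinear term becomes a convolution in $k$ coupling the mode $k$ to the pairs $(\ell,k-\ell)$. This gives, for $\Re\lambda>0$,
\begin{equation*}
(1+K_G(\lambda,k))\,\mathcal L\rho[h](\lambda,k)= S(\lambda,k) + \vae\,\mathcal N[\rho[h]](\lambda,k)\,,
\end{equation*}
where $S$ is the analytic source coming from $h_0$ and $\mathcal N$ is the echo term. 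Hypothesis (\ref{superpenrose}) keeps $1+K_G$ bounded away from zero and, together with the Paley--Wiener bounds on $G$ and $h_0$, yields the linear resolvent estimate (\ref{toangrenier}) in the strip $\Re\lambda\ge-\theta_1|k|$.

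Next I would run a bootstrap in a scale of analytic norms with a time-dependent radius, schematically $\sum_{k} \sup_t e^{\theta(t)|k|}|\rho[h](t,k)|$ with $\theta(t)$ slowly decreasing but staying positive, tailored so that the free-transport mixing and the linear decay $|\rho[h](t,k)|\lesssim e^{-\theta_1|k|t}$ coming from (\ref{toangrenier}) are built into the norm. The crux --- and the step I expect to be the main obstacle --- is the control of the bilinear term $\mathcal N$, i.e. the plasma echoes: resonant interactions feeding a high mode $k$ from products of lower modes at earlier times can mimic growth. One estimates these interactions mode by mode and shows that each echo consumes only a small, summable fraction of the analyticity budget; the geometric margin $\theta_1|k|$ furnished by (\ref{toangrenier}), reinforced by the quadratic gain $|k|^2$ in its denominator, is exactly what absorbs this loss for analytic (Gevrey-$1$) data.

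Finally, inserting the echo estimate into the Volterra equation and exploiting the analytic smallness carried by the prefactor $\vae$, a contraction argument closes the bootstrap for $\vae$ small enough, propagating the analytic norm for all time with a strictly positive residual radius. This yields $|\rho[h](t,k)|\lesssim e^{-\theta_1|k|t}$ uniformly in $t$, and hence, through the Poisson relation $E[h](k)=-\,ik|k|^{-2}\rho[h](k)$, the exponential decay of the electric field $E(t,x)$ as $t\to\infty$, which is precisely the claimed Landau damping.
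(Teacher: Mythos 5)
Your proposal follows exactly the route the paper indicates: the Fourier--Laplace/Volterra formulation with the symbol $1+K_G(\lambda,k)$, the Penrose-type stability condition (\ref{superpenrose}) yielding the resolvent bound (\ref{toangrenier}), and then an analytic-norm bootstrap against the plasma echoes, closed by the smallness of $\vae$. Be aware, though, that the paper does not itself prove this theorem --- it carries out only the linear decay estimate and explicitly defers the nonlinear echo analysis to \cite{GNR20} --- and your sketch likewise leaves the echo estimates (the genuinely hard step, where the time-dependent loss of analyticity radius must be shown summable over all echo chains) at the level of a plan, so what you have is a faithful roadmap of the Grenier--Nguyen--Rodnianski argument rather than a self-contained proof.
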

As observed in the introduction, the ergodicity of  the torus $\torus$
implies that for any $0<T<\infty$ the solution $\fe(t,x,v)$, of the rescaled
equation,  converges in  $L^\infty([0,T]\times\torus\times\R^d_v)$
weak$-\star$ to an $x$-independent function. Hence, with the
Poisson equation the electric field converges also in
$L^\infty(0,T;L^2(\torus))$ weak$-\star$ to $0\,$. Therefore, this property would
justify the term "baby Landau damping". In the above situation genuine
Landau damping would correspond to strong convergence
(for any $\delta>0$ and $\delta<T<\infty$) in $L^\infty(\delta,T;L^2(\torus))\,$.
Strong convergence will be the counterpart, in the present situation, of
Theorem~\ref{GNRth}. In fact one has

\begin{theorem}\label{babydamping}
For solutions of the rescaled Vlasov--Poisson equations,
\begin{equation}
  \label{rvpe}
\begin{aligned}
&\vae^2\del_t \fe + v\cdot \nabla_x \fe + \vae E^\vae\cdot\nabla_v \fe
  =0\,, & \nabla_v\cdot  E^\vae =\rho(t,x) = \int_{\R^d_v}f(t,x,v)dv
  -1\,,
\end{aligned}
\end{equation}
near an equilibrium
\begin{equation*}
f_0(x,v) = G(v) + h_0(x,v)\,,
\end{equation*}
with analytic data and profile satisfying the stability estimate
(\ref{superpenrose}),  and no restriction on the ``size" of the
analytic perturbation $h_0$,  on $\, 0<\delta<T<\infty\, $ the  electric field
$E^\vae(t,x)$ converges  exponentially fast to $0\,$  in
$L^\infty(\delta,T ;L^2(\torus))$ as $\vae \rightarrow 0$.
\end{theorem}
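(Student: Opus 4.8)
The plan is to read Theorem~\ref{babydamping} as the rescaled, long-time incarnation of the Landau damping theory recalled in Section~\ref{bbd}, with the \emph{coupling} $\vae$ — rather than the size of the perturbation — playing the role of the small parameter. First I would perform the time dilation $s=t/\vae^2$ together with the decomposition $\fe = G + g^\vae$. Since $\int_{\R^d_v}G\,dv=1$ and $G$ is $x$-independent, the self-consistent field is $E^\vae=E[g^\vae]$ with $\nabla_x\cdot E[g^\vae]=\int_{\R^d_v} g^\vae\,dv$, and setting $w^\vae(s,x,v):=g^\vae(\vae^2 s,x,v)$ turns (\ref{rvpe}) into the weakly coupled Vlasov--Poisson equation
\begin{equation*}
\del_s w^\vae + v\cdot\nabla_x w^\vae + \vae\, E[w^\vae]\cdot\nabla_v G + \vae\, E[w^\vae]\cdot\nabla_v w^\vae = 0\,, \qquad w^\vae(0,x,v)=h_0(x,v)\,,
\end{equation*}
in which \emph{both} the linear and the nonlinear coupling carry a factor $\vae$. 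The essential point is that a fixed physical time $t>0$ corresponds to $s=t/\vae^2\to\infty$, so the limit $\vae\to 0$ is exactly a long-time limit for $w^\vae$; moreover $E^\vae(t,x)=E[w^\vae](t/\vae^2,x)$, so an exponential decay $\|E[w^\vae](s)\|_{L^2(\torus)}\lesssim e^{-\theta s}$ with $\theta$ bounded below uniformly in $\vae$ immediately yields $\|E^\vae\|_{L^\infty(\delta,T;L^2(\torus))}\lesssim e^{-\theta\delta/\vae^2}$, which is the announced exponential convergence.

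Second I would establish this decay by the Fourier--Laplace analysis of Section~\ref{bbd}, adapted to the weak coupling. The relevant dispersion relation is now $1+\vae K_G(\lambda,k)=0$; since $G$ satisfies the stability estimate (\ref{superpenrose}) and $\sup_{k\neq0,\,\Re\lambda\ge0}|K_G(\lambda,k)|<\infty$ by analyticity of $G$, for $\vae$ small one has $\inf_{k,\,\Re\lambda>0}|1+\vae K_G(\lambda,k)|\ge 1-\vae\sup|K_G|>0$, so there is no unstable mode and the Penrose margin is in fact enhanced. Propagating the analyticity of $G$ and $h_0$ exactly as for (\ref{toangrenier}), I would obtain the analytic continuation of $\mathcal L\rho[w^\vae](\lambda,k)$ to a strip $\Re\lambda\ge-\theta_1|k|$ together with $|\mathcal L\rho[w^\vae](\lambda,k)|\le C_1(1+|k|^2+|\Im\lambda|^2)^{-1}$, whence the pointwise-in-$s$ exponential decay of $\rho[w^\vae]$ and $E[w^\vae]$ at a rate governed by the free-streaming phase mixing $\theta_0|k|$, hence independent of $\vae$. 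Because the linear coupling is only $O(\vae)$, the linearized evolution is a small perturbation of pure transport and this step is if anything more robust than in the unscaled case.

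The main obstacle is the nonlinear term over the very long window $s\in(0,T/\vae^2)$: under free streaming $\nabla_v w^\vae$ grows linearly in $s$ (filamentation), so $\vae\, E[w^\vae]\cdot\nabla_v w^\vae$ cannot be treated crudely. The resolution, which I would import from the analytic/Gevrey echo estimates of \cite{GNR20} (and \cite{MV10}), is that the exponential decay of $E[w^\vae]$ beats this linear growth, so the accumulated nonlinear interaction is $\lesssim \vae\int_0^\infty s\,e^{-\theta s}\,ds=O(\vae)$; the coupling $\vae$ here plays precisely the role that the amplitude $\vae$ of the perturbation played in Theorem~\ref{GNRth}, which is why no smallness of $h_0$ is needed. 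Controlling these echoes in time-shifted analytic norms uniformly in $\vae$ is the genuine work; once it is done, a continuity/bootstrap argument closes the decay estimate on $(0,T/\vae^2)$ uniformly in $\vae$, and translating back through $E^\vae(t)=E[w^\vae](t/\vae^2)$ gives the exponential convergence to $0$ in $L^\infty(\delta,T;L^2(\torus))$.
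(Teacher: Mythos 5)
Your proposal follows the same overall strategy as the paper---the time dilation $\tau=t/\vae^2$ converts the statement into a long--time Landau damping problem for analytic data, which is then handled with the machinery of \cite{GNR20}---but it diverges at the one step where the paper actually does something. The paper does not re-run any damping estimate: it uses the linearity of the Poisson equation, $\vae E^\vae[\rho^\vae]=E^\vae[\vae\rho^\vae]$, to identify the time-dilated rescaled solution with the solution of the \emph{unscaled} Vlasov--Poisson system issued from $G(v)+\vae h_0(x,v)$, and then invokes Theorem~\ref{GNRth} as a black box; the $O(1)$ perturbation of the rescaled problem becomes the $O(\vae)$ perturbation required by \cite{GNR20}, which is exactly how ``no restriction on the size of $h_0$'' is absorbed. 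You instead keep the perturbation of size one together with the coupling constant $\vae$, which leads you to the modified dispersion relation $1+\vae K_G=0$ and to a plan of redoing the Penrose and echo analysis for the weakly coupled equation. Your individual observations are correct (boundedness of $K_G$ on $\Re\lambda\ge 0$ for analytic $G$, hence an enhanced Penrose margin for small $\vae$; the coupling $\vae$ playing the role of the amplitude in the echo bounds), but your route leaves the uniform-in-$\vae$ nonlinear estimates as acknowledged open work, whereas the paper's reduction closes them by citation. The two routes are not trivially interchangeable: writing $f^\vae=G+W$ for the rescaled problem and $F_\vae=G+\vae H$ for the unscaled one, $W$ solves
\begin{equation*}
\del_\tau W+v\cdot\nabla_x W+\vae E[W]\cdot\nabla_v G+\vae E[W]\cdot\nabla_v W=0\,,
\end{equation*}
while $H$ solves
\begin{equation*}
\del_\tau H+v\cdot\nabla_x H+E[H]\cdot\nabla_v G+\vae E[H]\cdot\nabla_v H=0\,,
\end{equation*}
so the linearized coupling terms carry different powers of $\vae$ and the dispersion relations ($1+\vae K_G$ versus $1+K_G$) genuinely differ. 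In that sense your direct treatment is a defensible, if longer, alternative to the paper's one-line identification; to match the paper's level of completeness you would either have to carry out the echo estimates for the $\vae$-coupled equation, or find the exact change of unknowns that makes Theorem~\ref{GNRth} applicable verbatim.
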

\begin{proof}
 For  strong convergence one follows \cite{GNR20} .  First introduce
 the function $F^\vae (\tau, x,v)$ solution of the equation
\begin{equation*}
\del_\tau F^\vae +v \cdot\nabla_x F^\vae +  E^\vae[\vae \rho^\vae]\cdot
\nabla_v F^\vae\,, \label{GNR1}
\end{equation*}
with 
\begin{equation*}
\fe(t,x,v) = F^\vae ({t}/{\vae^2},x,v) \quad \hbox{and}
\quad\vae E^\vae[\rho^\vae]  = E^\vae[\vae\rho^\vae]\,.\label{trivial}
\end{equation*}
For $\vae\le \vae_0$ small enough, apply Theorem~\ref{GNRth} to
the solution $(F_\vae(\tau,x,v), E_\vae(\tau,x))$ of 
\begin{equation*}
\begin{aligned}
&\del_\tau F + v\cdot\nabla_x F + E(\tau) \cdot   \nabla_v F
  =0\,, \\ & F_\vae(0, x,v) =
  G(v) +\vae h_0(x,v)\,,
\end{aligned}
\end{equation*}
which coincide with the solution $(f^\vae(t,x,v), E^\vae(t,x))$ of (\ref{rvpe})
through the relation $(f^\vae(t,x,v),E^\vae(t,x,v))=(F_\vae(t/\vae^2,x,v),E_\vae( t/\vae^2,x))$.
 \end{proof} 
 
  \subsection{Road map for the short time quasilinear approximation}\label{RMQA}
  
  In this section, we  present a prospective method to proof the
  validity of a short time quasilinear approximation in the presence
  of unstable eigenvalues. Observe that for any time $t>0$  with,
$$
\fe(t,x,v) = G(t,v) + \vae h (t,x,v) \,, \quad
\fint h(t,x,v)dx=0\,,
$$
the Vlasov--Poisson equations are equivalent to the system,
 \begin{equation}\label{qasilinear}
 \begin{aligned}
 &\del_t G +\vae^2 \nabla_v\cdot \Big(\fint E[h] h dx\Big)
   =0\,,\quad E[h]=\nabla  \Delta^{-1} \int_{\R^d_v} h(t,x,v)dv\,, 
   \\ &\del_t h +v\cdot\nabla_x h + E	[h]\cdot\nabla_v G= - \vae
     \nabla_v \cdot\Big(E[h]   h-\fint E[h]  h dx\Big) \,.
 \end{aligned}
 \end{equation} 
  Next, assume the existence of  a simple non degenerated root
  $(\Re\lambda(0)>0, k) $ of the dispersion equation 
 \begin{equation}
 1 - \frac{1}{|k |^2} \int_{\R^d_v} \frac{ik \cdot \nabla_v G(0 ,v)}{
   \lambda   +ik \cdot v} dv=0\,,\label{penrose2}
 \end{equation}
{with $G(0,v)=G_0(v)$}, and consider solutions  $\fe(t,x,v)$ of the
 Vlasov equation with initial data
 \begin{equation*}
 \fe(0,x,v)=G_0(v)+\vae h_0(x,v)=G_0(v) +\vae \frac{E(0,k) \cdot \nabla_v
   G_0(v)}{ \lambda  +ik \cdot v} e^{ik\cdot
   x}\,. \label{slaved2}
 \end{equation*}
 Assuming that $ \fe(0,x,v)$ is  analytic, one observes (as proven by
 Benachour in \cite{BEN86}) that the corresponding solution of the
 Vlasov equation is also analytic. Hence (see \cite{KA66} Chapter~2,
 Section~1) the root can be extended on a finite time interval as
 simple solution of the equation
 \begin{equation*}
 1 - \frac{1}{|k |^2} \int_{\R^d_v} \frac{ik \cdot \nabla_v G(t ,v)}{
   \lambda(t)   +ik \cdot v} dv=0\,, \label{penroset}
 \end{equation*}
 and then one  introduces the approximate solution
 \begin{equation}
 \tilde h(t,x,v) = \frac{E(0,k) \cdot \nabla_v G(t,v)}{ \lambda(t)  +ik
   \cdot v}  e^{ {\int_0^t ds\lambda(s)} +ik\cdot x}\,. \label{ansatz}
 \end{equation} 
 The function  $\tilde h$ will be used to construct an approximate diffusion
 $\widetilde{\mathbb D}^\vae(v)$ such that, for short time one has
 \begin{equation*}
   \del_tG(t,v)-\nabla_v\cdot \big(\widetilde{\mathbb D}^\vae(v)\nabla_v G(t,v)\big)=\mathcal O(\vae^3),
 \end{equation*}
 while what follows from equations (\ref {qasilinear}) is the estimate
   \begin{equation}
   \del_tG(t,v)=\mathcal O(\vae^2) \label{2order}\,.
   \end{equation} 
   Since $\lambda(t) $ and of course $G(t)$ itself
   are analytic functions, from (\ref{2order}) and (\ref{ansatz}), one deduces 
\begin{equation*}
\del_t \tilde h + v\cdot \nabla_x \tilde h + E[\tilde  h]\cdot \nabla_v
G(t) = O(\vae^2)\,.
\end{equation*}
Hence, one has also
\begin{equation*}
 \label{approx}
 \del_t(h- \tilde h) + v\cdot \nabla_x(h- \tilde h )+ E[h-\tilde h ]
\cdot \nabla_v  G   = -\vae \nabla_v \cdot\Big(E[h]   h-\fint E[h]  h
 dx\Big) +\mathcal O(\vae^2) \,.
 \end{equation*}
 Then, with   $ (h- \tilde h)(0,x,v)=0 $, one has also $ h(t)-\tilde h
 (t) =\mathcal O(\vae)\,, $ which eventually implies
\begin{equation*}
  \del_t G +\vae^2 \nabla_v\cdot\Big(\fint E[\tilde h] \tilde h dx\Big)
  =\vae^2 \nabla_v \cdot \fint \Big(E[\tilde h] \tilde h dx  -    E[h] h
  \Big)dx  =\mathcal O(\vae^3)\,.
\end{equation*}
  As pointed above, if $(\lambda, k)$ are solutions of the dispersion 
  equation, then the same is true for $(\lambda^*, -k)\,$ and one can
  extend the above comparison between genuine solutions 
  with  initial data
 \begin{equation}
 \fe(0,x,v) = G(v) +\vae \Re (h(0,x,v))\,, \label{initialdata}
 \end{equation}
 and the approximate solutions given by
 \begin{equation*}
 \tilde \fe(t,x,v) = G(t,v) +\vae \Re \tilde h(t,x,v)\,.
 \end{equation*}
 Since the functions $\tilde h$  satisfy the relation
 \begin{equation}
  (\lambda+ ik\cdot v) \tilde h(t,k,v)+ E[\tilde h(t,k,v)]\cdot \nabla_v G(t,v)=0\,,
 \end{equation}
 one obtains, for solutions with initial data given by
 (\ref{initialdata}),
 \begin{equation*}
 \begin{aligned}
   &\del_tG(t,v) -\vae^2\nabla_v \cdot\left(\frac{ E(0,k)\otimes E(0,k)\Re \lambda e^{2\Re{ \int_0^t ds\lambda(s)} 
}}{(k\cdot v -\Im\lambda)^2+(\Re \lambda)^2}\nabla_v
   G(t,v)\right) \\
   &=\del_tG(t,v) +\vae^2 \nabla_v\cdot\Re\Big(\fint E[\tilde h] \tilde h dx\Big)=
\mathcal O(\vae^3)\,.
\end{aligned}
 \end{equation*}
 The above construction can be combined  with the Dunford--Kato formula (\ref{Dunford-Kato}).
 Assuming again,  that the roots of the dispersion relation
 $(\lambda, k(\lambda))$ are simple one obtains, for any initial data and any $\delta >0\,,$
 summing with respect to the solutions
 of the dispersion  equation (\ref{penrose2}) with $\Re \lambda >\delta$ the equation
  \begin{equation*}
 \del_t G(t,v) -\sum_{\Re \lambda>\delta }\vae^2\nabla_v\cdot \left(\frac{
   E(0,k(\lambda))\otimes E(0,k(\lambda))\Re \lambda  e^{2\Re {\int_0^t ds\lambda(s) } }} {(k(\lambda)\cdot v
   -\Im\lambda)^2+(\Re \lambda)^2}\nabla_v G(t,v)\right)\\ =\mathcal O(\vae^3) +
 \mathcal O(\vae^2) e^{\delta t}\,,
 \end{equation*} 
 which is the standard quasilinear approximation.

\section{Remarks and Conclusion}
As said in the introduction,  the thread in this contribution is the
comparison  from a genuine mathematical point of view of different
approaches leading to the quasilinear diffusion approximation.
\begin{remark}
The most natural one was with the introduction of the rescaled
equation. However in such configuration it was shown that the
stochastic scenario, which implies that the electric field is
independent of the density, was almost compulsory.  As such, one could
also start directly from a stochastic flow solution of the  ODEs
\begin{equation*}
\begin{aligned}
 & \frac{d}{dt}  X^\vae (t)=  \frac1{\vae^2} V^\vae(t)\,, \\ &
\frac{d}{dt}  V^\vae
  (t) =-\frac 1\vae E^\vae (t,X^\vae(t))\,,
\end{aligned}
\end{equation*}
with a convenient correlation function $A_\uptau$ leading directly at
the macroscopic level, without the kinetic step in between,  to a
diffusion equation
\begin{equation*}
\del_t\overline{\fe} -\nabla_v\cdot (\mathbb {D} (v)\nabla_v
\overline{\fe})=0\,.
\end{equation*}
 Comparison with the diffusion matrix given also in terms of $A_\uptau$
 by (\ref{diffusionmatrix})  leads in such cases to a closed formula
 for the determination of such diffusion. For an interpretation at the
 level of plasma physics for such relation see reference \cite{BB} and the
original ones  \cite{DU66, Wei69}. Complete proofs may be obtained following the
contributions \cite{KP80, DGL87, EP10, EE03b},

 Weak convergence and introduction of randomness imply that
 analysis should be made on the solution rather than on the
 equation. This leads to the introduction of the Duhamel series which
 may be considered as an avatar of other BBGKY hierarchies. However 
 in \cite{PV03, LV04}, the authors have introduced some decorrelation properties
 valid at any time, which close the Duhamel series at second order.
 This is the road which was followed in this contribution.
 \end{remark}
 \begin{remark}
 The short allusion to the Landau damping was motivated on one hand by
 the comparison with issue of strong, versus weak convergence, to $0$ of
 the electric field in the rescaled equation and on the other hand to
 underline the role of estimates on the charge density $\rho[h]$ under the stability
 condition (\ref{superpenrose}), which allows to extend the resolvent
 beyond the imaginary axis.
 \end{remark}
 \begin{remark}
 The  short time validity of the quasilinear approximation is based on
 even much more formal presentations that can be found in basic plasma
 physics textbooks (see for instance \cite{KT73}  pages 514--517).  Here,
 the symbol $\mathcal O(\vae)$ is used everywhere and for a complete proof this
 should be clarified.
 For short time, systematic use of the Nash--Moser
 theorem should balance the loss derivative (of order $1$ with respect
 to $v$) in  Section~\ref{RMQA}. One should keep in mind the
 striking difference between the rescaled diffusion scenario with an independent
 and non-selfconsistent stochastic vector field, which is the typical model for long time dynamics,
 and the short time diffusion scenario, where the selfconsistent electric field is slaved to the solution
 by the Poisson equation. In fact this two diffusion regimes are present in
 the nonlinear relaxation of the weak warm beam-plasma instability problem.
 Selfconsistent numerical simulations of such problem \cite{BEEB11} confirm the existence of this two
 diffusion regimes, plus a third regime between the two. In this third
 regime, dubbed the ``trapping turbulent regime'' in plasma physics literature,
 nonlinear wave-wave coupling plays an important role. Until now and up to
 our knowledge there is no even a roadmap for a full mathematical description
 of such regime.
 \end{remark}

\section {Acknowledgments}
The first author wishes to thank the Observatoire de la C\^ote d'Azur
and the Laboratoire  J.-L. Lagrange for their hospitality and
financial support.

\end{document}